\def\qed{\hfill {\hbox{${\vcenter{\vbox{               
   \hrule height 0.4pt\hbox{\vrule width 0.4pt height 6pt
   \kern5pt\vrule width 0.4pt}\hrule height 0.4pt}}}$}}}
\def\utr{\, \underline{\triangleright}\, }
\def\otr{\, \overline{\triangleright}\, }
\newtheorem{theorem}{Theorem}
\newtheorem{lemma}[theorem]{Lemma}
\newtheorem{proposition}[theorem]{Proposition}
\newtheorem{observation}{Observation}
\theoremstyle{definition}
\newtheorem{example}{Example}
\newtheorem{definition}{Definition}
\date{}
\title{\Large \textbf{Trace Diagrams and Biquandle Brackets}}
\author{Sam Nelson\footnote{Email: Sam.Nelson@cmc.edu. Partially supported by Simons Foundation collaboration grant 316709.}\and
Natsumi Oyamaguchi\footnote{Email: natsumi.3-29.math@diary.ocn.ne.jp.}}
\begin{document}
\maketitle

\begin{abstract}
We introduce a method of computing biquandle brackets of oriented knots
and links using a type of decorated trivalent spatial graphs we call
\textit{trace diagrams}. We identify algebraic conditions on the biquandle
bracket coefficients for moving strands over and under traces and identify
a new stop condition for the recursive expansion. In the case of monochromatic 
crossings we show that biquandle brackets satisfy a Homflypt-style skein 
relation and we identify algebraic conditions on the biquandle bracket 
coefficients to allow pass-through trace moves.
\end{abstract}

\parbox{6in} {\textsc{Keywords:} Quantum enhancements, biquandles, biquandle
counting invariants, biquandle brackets, trace diagrams

\smallskip

\textsc{2010 MSC:} 57M27, 57M25}

\section{\large\textbf{Introduction}}\label{I}

\textit{Biquandles} are a type of algebraic structure with axioms motivated
by the Reidemeister moves in knot theory. More precisely, a biquandle is a set
with two operations with axioms chosen so that the number of biquandle 
\textit{colorings} or assignments of biquandle elements to the semiarcs in
an oriented knot or link diagram satisfying certain conditions at crossings
is the same for all diagrams of the given knot or link, thus defining a 
nonnegative integer-valued invariant of knots and links known as  
the \textit{biquandle counting invariant} $\Phi_X^{\mathbb{Z}}$. An 
\textit{enhancement} of the biquandle counting invariant is a stronger 
invariant $\Phi_X^{\phi}$ which specializes to $\Phi_X^{\mathbb{Z}}$ in some way, 
e.g. by taking a cardinality or evaluating at $u=1$, etc. 
See \cite{EN,FJK,KR} etc. for more about biquandles.

In \cite{NOR} a type of enhancement of $\Phi_X^{\mathbb{Z}}$ was defined
using skein relations with coefficients depending on the biquandle colors at
a crossing. This setup poses an obvious objection: smoothings break the 
biquandle coloring. In \cite{NOR} this problem is resolved by thinking
of the invariant in state-sum form, doing all smoothings simultaneously
to obtain states without biquandle colors. Generalizations of biquandle 
brackets have recently been described in papers such as
\cite{NOSY} and \cite{IM}.

In this paper we describe a method for computing biquandle bracket invariants 
recursively using a type of decorated trivalent spatial graph diagram
called a \textit{trace diagram} similar to diagrams used in e.g. \cite{DK}
by defining biquandle colorings
for these diagrams. We identify algebraic conditions on the biquandle bracket
coefficients which are necessary and sufficient to allow moving strands over,
under or through traces. As an application we show that biquandle brackets 
satisfy a skein relation similar to that of the Homflypt polynomial \cite{H}
at monochromatic crossings, and we give an example to illustrate how this
can be helpful for faster hand computations of the invariant.

The paper is organized as follows. In Section \ref{B} we review biquandles, 
biquandle brackets and the biquandle bracket invariant. In Section \ref{T}
we introduce trace diagrams, their biquandle colorings and our method for
computing biquandle bracket values recursively in terms of trace diagrams.
We identify algebraic conditions for a biquandle bracket to admit 
overcrossing trace moves, undercrossing trace moves or both. In Section 
\ref{M} we look at the special case of monochromatic crossings, identifying
a Homflypt-style skein relation satisfied by all biquandle brackets as well
as the algebraic conditions required for a biquandle bracket to admit 
pass-through trace moves at monochromatic crossings.
We conclude in Section \ref{Q} with some questions for future work.

\section{\large\textbf{Biquandles and Biquandle Brackets}}\label{B}

In this section we briefly review biquandles and biquandle brackets; see
\cite{EN,NOR} for more.

\begin{definition}
A \textit{biquandle} is a set $X$ with operations $\utr,\otr:X\to X$
satisfying for all $x,y,z\in X$
\begin{itemize}
\item[(i)] $x\utr x=x\otr x$,
\item[(ii)] The maps $\alpha_x,\beta_x:X\to X$ and $S:X\times X\to X\times X$
given by 
\[\alpha_x(y)=y\otr x,\quad \beta_x(y)=y\utr x\quad \mathrm{and}\quad
S(x,y)=(y\otr x, x\utr y)\]
are invertible, and
\item[(iii)] We have the \textit{exchange laws}
\[\begin{array}{rcl}
(x\utr y)\utr(z\utr y) & = & (x\utr z)\utr(y\otr z) \\
(x\utr y)\otr(z\utr y) & = & (x\otr z)\utr(y\otr z) \\
(x\otr y)\otr(z\otr y) & = & (x\otr z)\otr(y\utr z).
\end{array}\]
\end{itemize}
It is sometimes convenient for the sake of space to
write $x\utr y$ as $x^y$ and $x\otr y$ as $x_y$.
\end{definition}

\begin{example}
A useful class of biquandles is \textit{Alexander biquandles}: let $X$
be a module over the two-variable Laurent polynomial ring 
$\mathbb{Z}[t^{\pm 1},s^{\pm 1}]$ and define
\[x^y=tx+(s-t)y\quad\mathrm{and}\quad x_y=sx.\]
One easily verifies that the biquandle axioms are satisfied. In particular,
if $X=\mathbb{Z}_n$ and $s,t\in X$ are coprime to $n$, then $X$ is a 
\textit{linear} Alexander biquandle.
\end{example}

The biquandle axioms are chosen so that given a \textit{coloring} of an 
oriented link diagram $L$ by a biquandle $X$, i.e., an assignment of elements
of $X$ to the semiarcs of $L$ satisfying at every crossing the conditions
\[\includegraphics{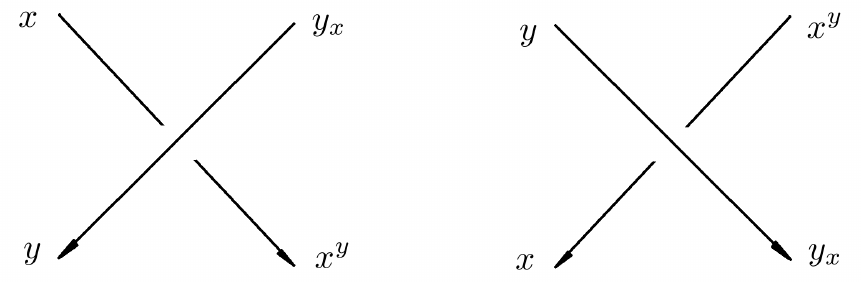}\]
before a Reidemeister move, there is a unique coloring of the diagram after 
the move which agrees with the pre-move coloring outside the neighborhood 
of the move. It follows that for a finite biquandle $X$, the number of 
$X$-colorings of an oriented link diagram is a link invariant. Indeed, it is
not just the \textit{number} of such colorings, but the \textit{set} of such 
colorings for any fixed choice of diagram or, equivalently, the set of 
equivalence classes of colorings of diagrams of $L$, which is an invariant of
links. In particular, if $X$ is a finite biquandle, then the set of colorings
of a (tame) oriented knot or link is finite and can be computed from a diagram,
either by brute force counting or by using the structure of the biquandle
where possible.

\begin{example}
Let us compute the set of colorings of the trefoil knot $3_1$
\[\includegraphics{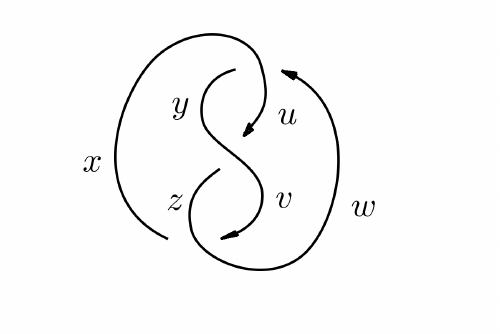}\]
by the linear Alexander biquandle $X=\mathbb{Z}_3$ with $t=1$ and $s=2$. 
We have biquandle operations
\[x^y=x+y\quad\mathrm{and}\quad x_y=2x\]
yielding system of coloring equations over $\mathbb{Z}_3$
\[
\begin{array}{rcl}
2x & = & u \\
x+y & = & w \\
2y & = & v \\
y+z & = & u \\
2z & = & w \\
x+z & = & v
\end{array}\]
so the space of colorings is the kernel of the matrix
\[
\left[\begin{array}{rrrrrr}
2 & 0 & 0 & 2 & 0 & 0 \\
1 & 1 & 0 & 0 & 0 & 2 \\
0 & 2 & 0 & 0 & 2 & 0 \\
0 & 1 & 1 & 2 & 0 & 0 \\
0 & 0 & 2 & 0 & 0 & 2 \\
1 & 0 & 1 & 0 & 2 & 0
\end{array}\right]
\stackrel{\mathrm{row-equiv.\ over\ }\mathbb{Z}_3}{\longleftrightarrow}
\left[\begin{array}{rrrrrr}
1 & 0 & 0 & 0 & 2 & 2 \\
0 & 1 & 0 & 0 & 1 & 0 \\
0 & 0 & 1 & 0 & 0 & 1 \\
0 & 0 & 0 & 1 & 1 & 1 \\
0 & 0 & 0 & 0 & 0 & 0 \\
0 & 0 & 0 & 0 & 0 & 0 \\
\end{array}\right]
\]
given by  $\mathbb{Z}[(1,2,0,2,1,0), (1,0,2,2,0,1)]$. Then 
$\Phi_X^{\mathbb{Z}}(3_1)=3^2=9$.

Alternatively, we can compute the set of colorings diagrammatically by
checking which assignments of elements of $X$ satisfy the coloring conditions
at every crossing.
\[\includegraphics{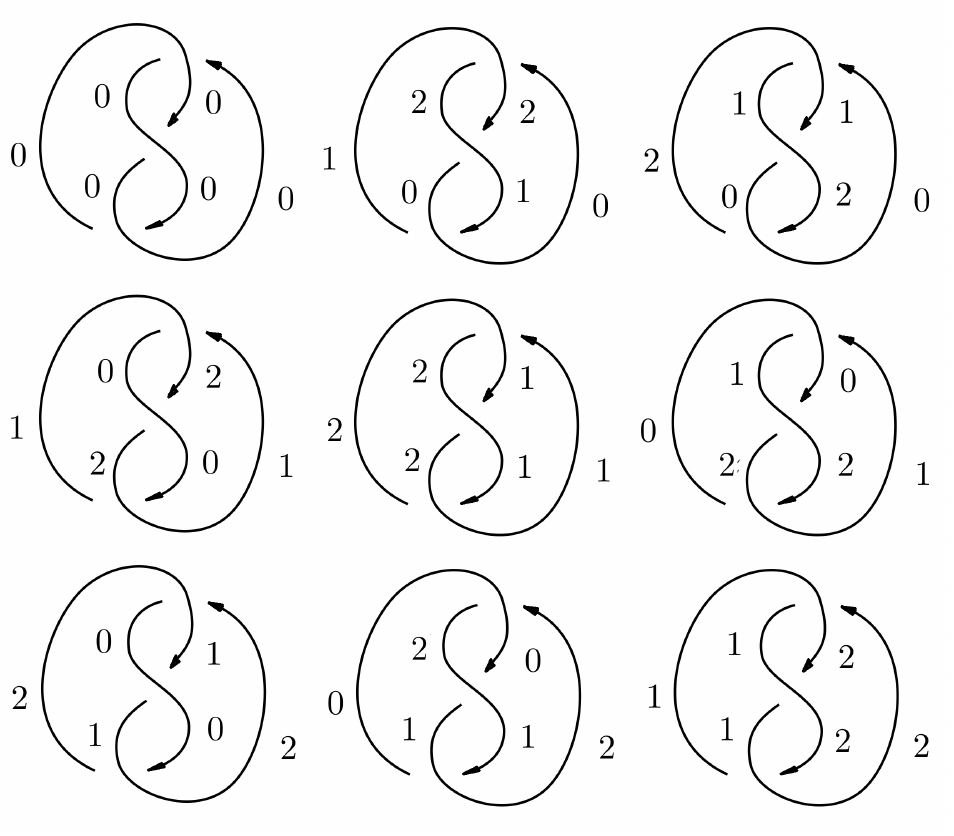}\]
\end{example}

\begin{definition}
A map $f:X\to Y$ between biquandles is a \textit{homomorphism} if we have
\[f(x\utr x')=f(x)\utr f(x') \quad \mathrm{and}\quad
f(x\utr x')=f(x)\otr f(x')\]
for all $x,x'\in X$.
\end{definition}

The set of $X$-colorings of a knot or link diagram $L$ can be identified
with the set of \textit{biquandle homomorphisms} $f:\mathcal{B}(L)\to X$
from the the \textit{fundamental biquandle} $\mathcal{B}(L)$ of $L$, defined as the biquandle generated by the semiarcs of $L$ modulo the crossing relations
of $L$, to the biquandle $X$. In particular, a coloring provides an
image $f(x_j)\in X$ for each generator $x_j$ of $\mathcal{B}(L)$, defining
a unique homomorphism $f:\mathcal{B}(L)\to X$ if and only if the crossing 
relations of $L$ are satisfied. Thus, we have 
$\Phi_X^{\mathbb{Z}}(L)=|\mathrm{Hom}(\mathcal{B}(L),X)|$. For more see \cite{EN}.

\begin{definition}
Now, let $X$ be a biquandle and $R$ a commutative ring with identity. A 
\textit{biquandle bracket} over $R$ is a pair of maps 
$A,B:X\times X\to R^{\times}$ assigning units $A_{x,y},B_{x,y} \in R$ to pairs 
of elements $(x,y)\in X\times X$ such that the following conditions are
satisfied:
\begin{itemize}
\item[(i)] For all $x\in X$, the elements $-A_{x,x}^2B_{x,x}^{-1}$ are equal, 
with their common value denoted  as $w\in R$;
\item[(ii)] For all $x,y\in X$, the elements $-A_{x,y}^{-1}B_{x,y}-A_{x,y}B_{x,y}^{-1}$
are equal, with their common value denoted as $\delta$, and
\item[(iii)] For all $x,y,z\in X$ we have
\[\begin{array}{rcl}
A_{x,y}A_{y,z}A_{x^y,z_y} & = & A_{x,z}A_{y_x,z_x}A_{x^z,y^z} \\
A_{x,y}B_{y,z}B_{x^y,z_y} & = & B_{x,z}B_{y_x,z_x}A_{x^z,y^z} \\
B_{x,y}A_{y,z}B_{x^y,z_y} & = & B_{x,z}A_{y_x,z_x}B_{x^z,y^z} \\
A_{x,y}A_{y,z}B_{x^y,z_y} & = & 
A_{x,z}B_{y_x,z_x}A_{x^z,y^z} 
+A_{x,z}A_{y_x,z_x}B_{x^z,y^z} \\ 
& & +\delta A_{x,z}B_{y_x,z_x}B_{x^z,y^z} 
+B_{x,z}B_{y_x,z_x}B_{x^z,y^z} \\
B_{x,y}A_{y,z}A_{x^y,z_y} 
+A_{x,y}B_{y,z}A_{x^y,z_y} & & \\
+\delta B_{x,y}B_{y,z}A_{x^y,z_y} 
+B_{x,y}B_{y,z}B_{x^y,z_y}  
& = & B_{x,z}A_{y_x,z_x}A_{x^z,y^z}. \\
\end{array}\]
\end{itemize} 
We can specify a biquandle bracket with an $n\times 2n$ block matrix $[A|B]$
where the $(i,j)$-entries of $A$ and $B$ respectively are $A_{x_i,x_j}$ and
$B_{x_i,x_j}$.
See \cite{NOR} for more.
\end{definition}

The biquandle bracket axioms are chosen so that the state-sum expansion of
an $X$-colored oriented knot or link diagram $L_f$ using the skein relations
\[\includegraphics{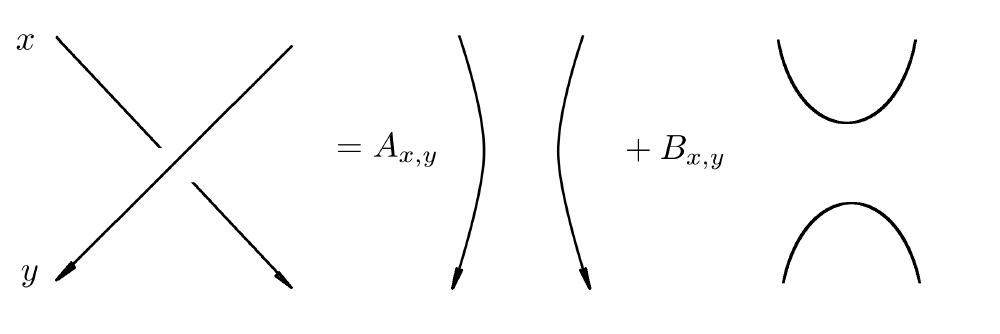}\]
\[\includegraphics{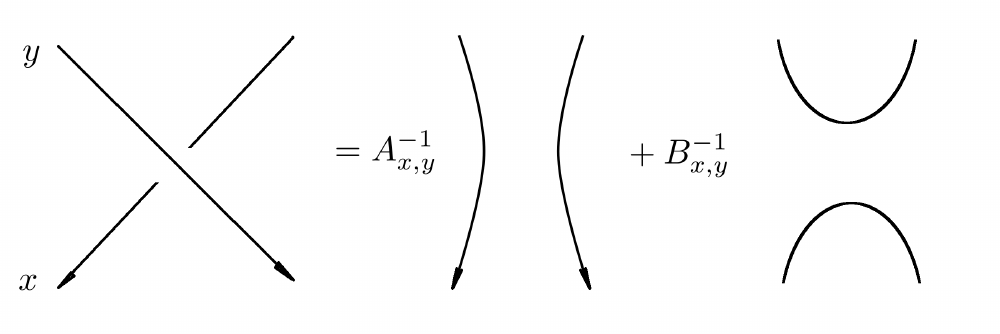}\]
with $\delta$ the value of a simple closed curve and $w$ the value of a 
positive crossing
\[\includegraphics{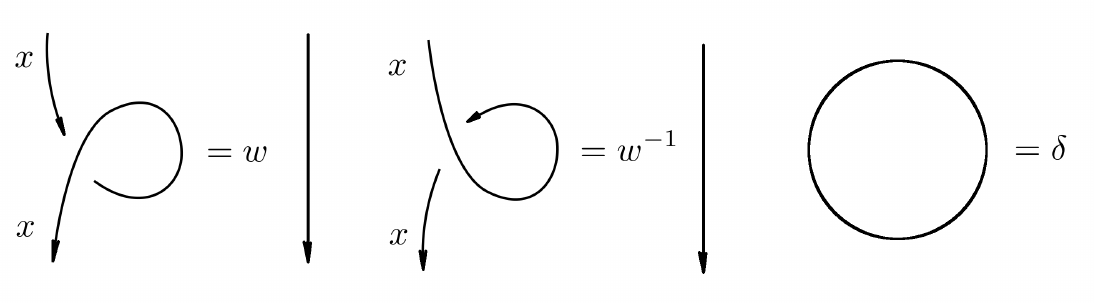}\]
is invariant under $X$-colored Reidemeister moves. More precisely, for each
$X$-coloring $L_f$ of an $n$-crossing diagram $L$:
\begin{itemize}
\item A \textit{state} of $L_f$ is a choice of $C_j=A_{xy}$ or $B_{xy}$ 
smoothing at every crossing $j=1,\dots, n$;
\item For each state we compute the product of the $C_j$s and $\delta$s for 
each component of the smoothed state, and
\item Multiply the sum of these over the
set of states by the \textit{writhe correction factor}
$w^{n-p}$ where $n,p$ are the number of negative and positive crossings
respectively,
\item Obtaining the \textit{state-sum}
\[\beta(L_f)=w^{n-p}\sum_{\mathrm{states}} 
\left(\left(\prod_{\mathrm{smoothings}}C_j\right)\left(\prod_{\mathrm{components}}\delta \right)\right). \]
\end{itemize}
Then the multiset of such state-sum values over the set of 
$X$-colorings is the \textit{biquandle bracket multiset invariant} of $L$
with respect to the biquandle $X$ and bracket $\beta$, denoted
$\Phi_X^{\beta, M}(L).$ It is common practice to convert the multiset invariant
to a ``polynomial'' form by writing the elements of the multiset as exponents
of a dummy variable $u$ with multiplicities as coefficients, e.g. converting
$\{0,0,1,1,1,5\}$ to $2+3u+u^5$.

\begin{example}
Let $X=\{1,2\}$ with $1^1=1^2=1_1=1_2=2$ and $2^1=2^2=2_1=2_2=1$ and 
$R=\mathbb{Z}_7$. Then one verifies that the matrix
\[\left[\begin{array}{rr|rr}
1 & 6 & 2 & 5 \\
4 & 1 & 1 & 2
\end{array}\right]\]
defines a biquandle bracket with $\delta=-1(2)-1(4)=-6=1$ and $w=-1^24=-4=3$. 
The Hopf link below has four $X$-colorings
\[\includegraphics{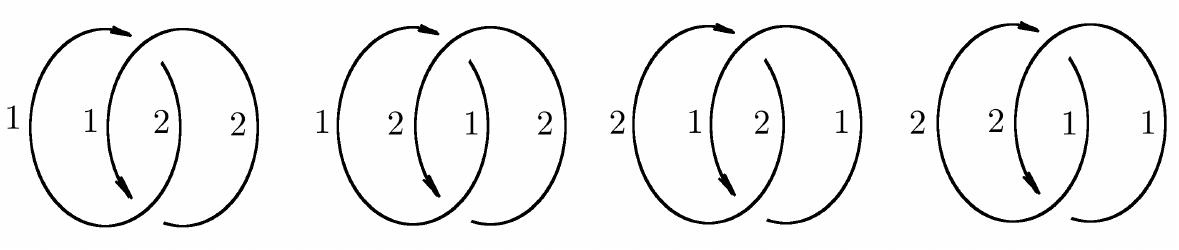}\]
each of which expands to four states with coefficients and state-sums
as pictured. 
\[\includegraphics{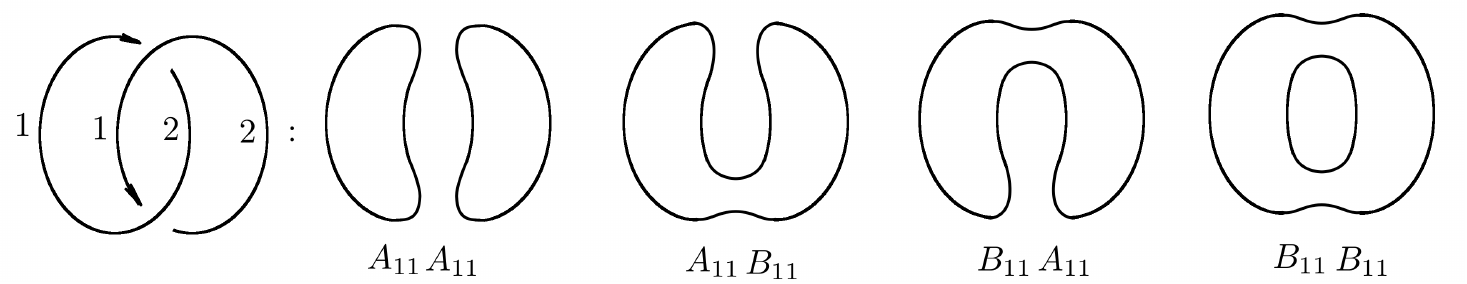}\]
\[\includegraphics{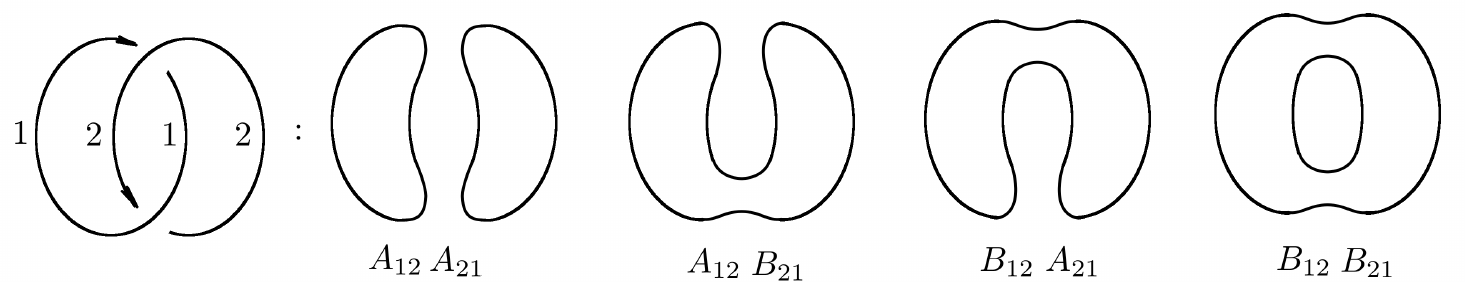}\]
\[\includegraphics{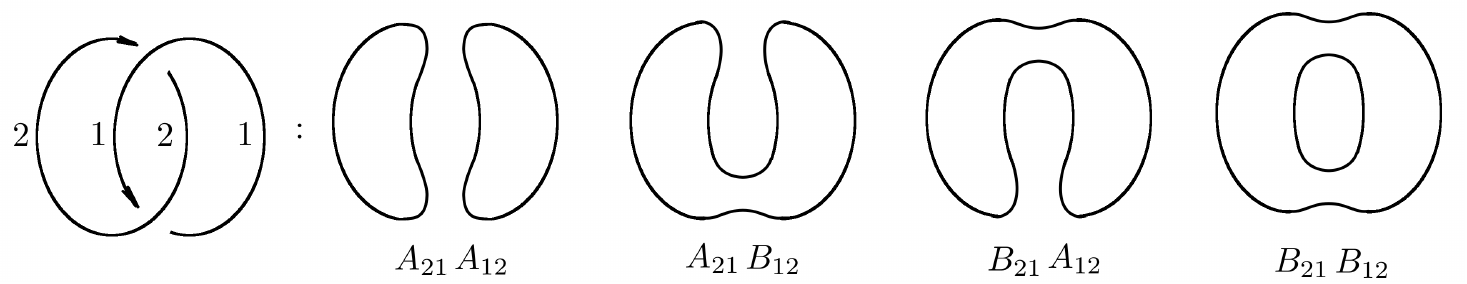}\]
\[\includegraphics{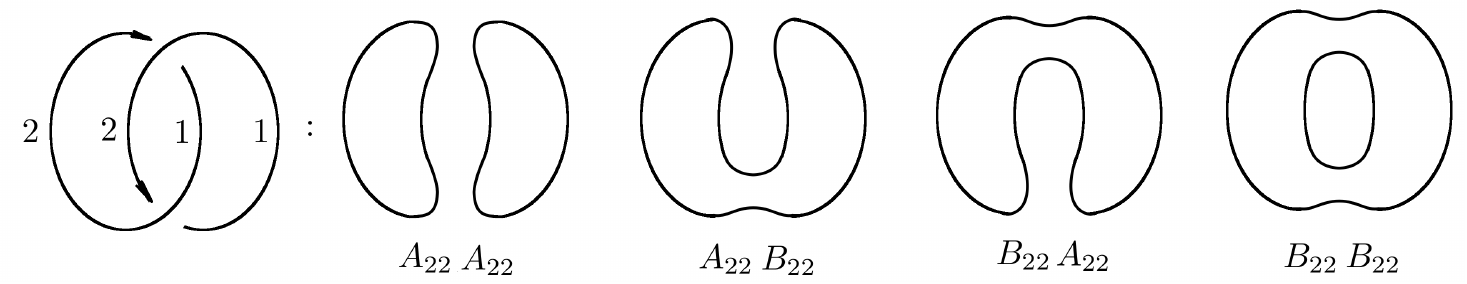}\]
The the state-sums are 
\begin{eqnarray*}
\beta\left(\raisebox{-0.4in}{\includegraphics{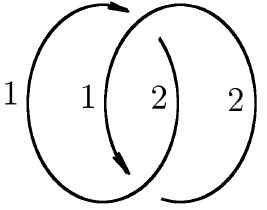}}\right)  
& = &  w^{-2}(\delta^2A_{11}^2+2\delta A_{11}B_{11}+\delta^2B_{11}^2) \\ 
& = &  3^{-2}(1^21^2+(1)(1)(2)+(1)(2)(1)+1^22^2)=1, \\
\beta\left(\raisebox{-0.4in}{\includegraphics{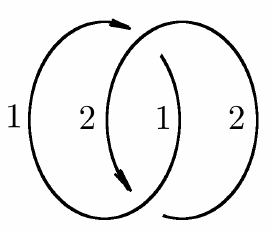}}\right)  
& = &  
w^{-2}(\delta^2A_{12}A_{21}+\delta A_{12}B_{21}+\delta B_{12}A_{21}+\delta^2B_{12}B_{21}) \\ & = &  3^{-2}(1^2(6)(4)+(1)(6)(1)+(1)(5)(4)+1^2(5)(1))=3,\\
\beta\left(\raisebox{-0.4in}{\includegraphics{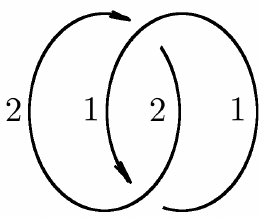}}\right)  
& = &  
w^{-2}(\delta^2A_{21}A_{12}+\delta A_{21}B_{12}+\delta B_{21}A_{12}+\delta^2B_{21}B_{12}) \\ & = & 3^{-2}(1^2(4)(6)+(1)(1)(6)+(1)(4)(5)+1^2(1)(5))=3, \\
\beta\left(\raisebox{-0.4in}{\includegraphics{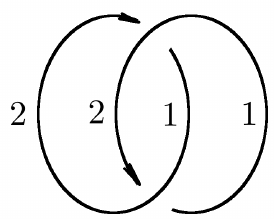}}\right)  
& = &  
w^{-2}(\delta^2A_{22}^+\delta A_{22}B_{22}+\delta B_{22}A_{21}+\delta^2B_{22}^2) 
\\ & = & 3^{-2}(1^2(1)^2+(1)(1)(2)+(1)(2)(1)+1^2(2)^2)=1. 
\end{eqnarray*}
Then the multiset invariant is $\Phi_X^{\beta,M}(L)=\{1,1,3,3\}$ or in 
polynomial form, $\Phi_X^{\beta}(L)=2u+2u^3$.
\end{example}

\section{Trace Diagrams}\label{T}

In this section we introduce a method for computing this invariant
recursively by applying the skein expansion one crossing at a time
as opposed to the state-sum method of performing all smoothings 
simultaneously. To this end we introduce \textit{trace diagrams}.

\begin{definition}
A \textit{trace diagram} is a planar diagram with \textit{crossings}
and \textit{signed traces}:
\begin{itemize}
\item[(i)] A \textit{crossing} is a degree four vertex with pass-through
orientation and crossing information, i.e. pairs of edges resolve into 
an oriented over-crossing strand and an oriented under-crossing strand as
depicted:
\[\includegraphics{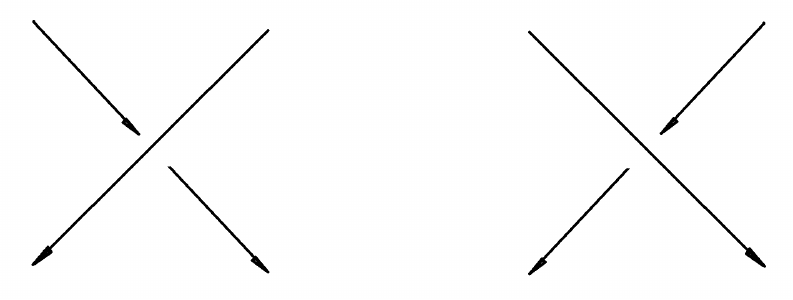}\]
\item[(ii)] The degree three vertices have two oriented edges and one 
unoriented dashed edge called a \textit{trace}, decorated with a $+$ or 
$-$ sign as depicted; we require that each trace either connects two 
parallel oriented pass-through vertices or connects a bivalent 
sink to a bivalent source so that the neighborhood of each trace is as depicted.
We will refer to the former as \textit{type A} traces and the later as 
\textit{type B} traces.
\[\includegraphics{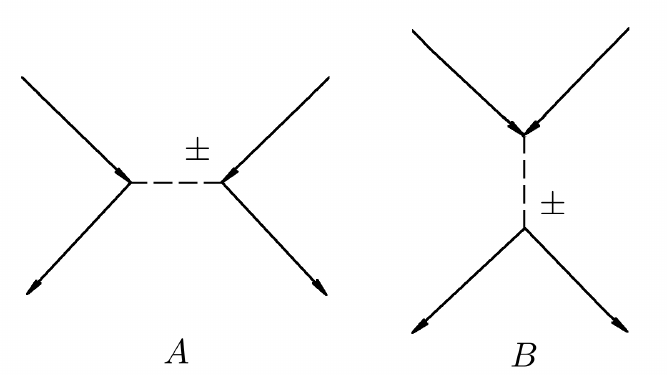}\]
\end{itemize}
\end{definition}

Unsigned trace diagrams have appeared in the literature before, with traces 
recording the sites of smoothings in the Kauffman bracket expansion of a knot 
or link. Our general idea is that a trace with a $+$ or $-$ sign respectively
will act like a positive or negative crossing respectively for the purposes 
of biquandle colorings and Reidemeister moves. More precisely, a 
\textit{biquandle coloring} of a trace diagram by a biquandle $X$ is an 
assignment of elements of $X$ to the directed edges in $X$ such that the 
crossing conditions from section \ref{B} and following conditions at every 
trace are satisfied:

\[\includegraphics{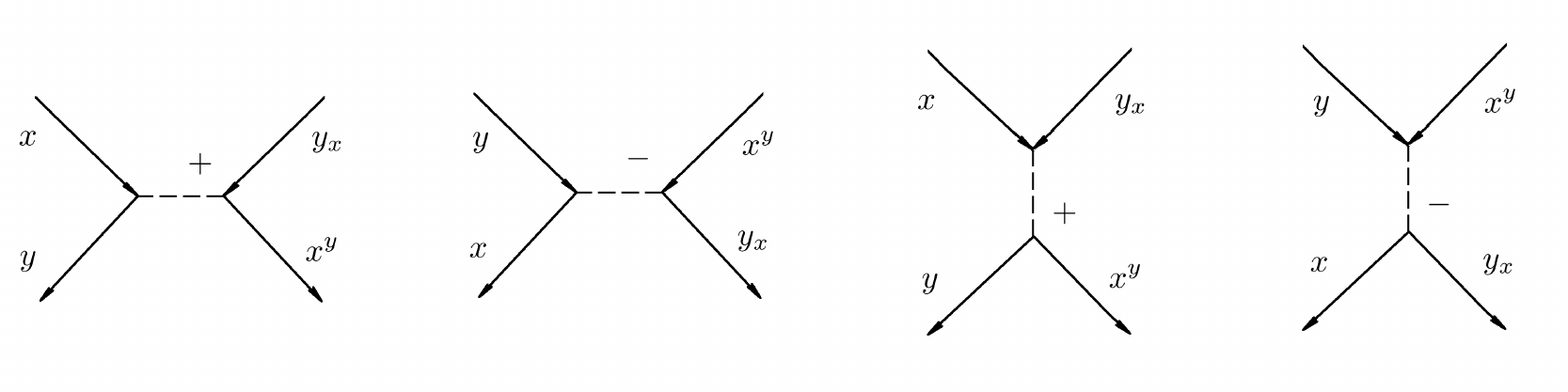}\]

\begin{definition}
Let $X$ be a finite biquandle, $R$ a commutative ring with identity and
$\beta$ an $X$-bracket over $R$. Define a map $[\ ]:\mathcal{L}_X\to R$ from 
the set of $X$-colored oriented trace diagrams $\mathcal{L}_X$ to $R$ 
recursively by the rules
\begin{itemize}
\item[(i)] 
\[\includegraphics{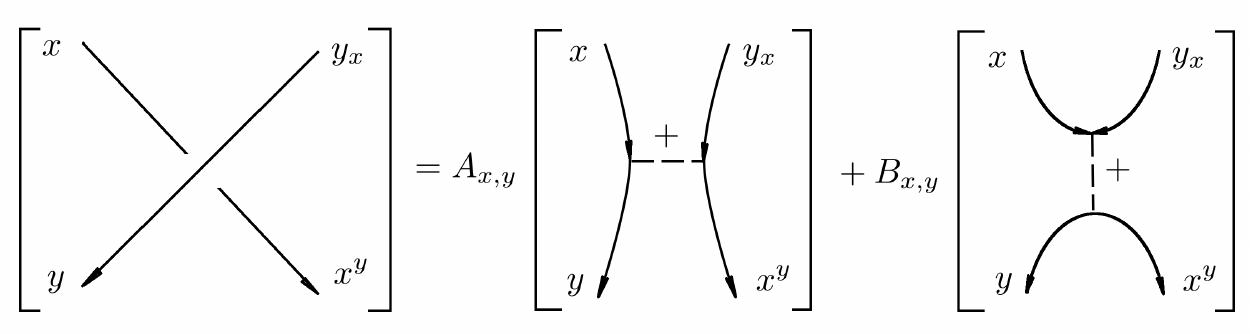}\]
\[\includegraphics{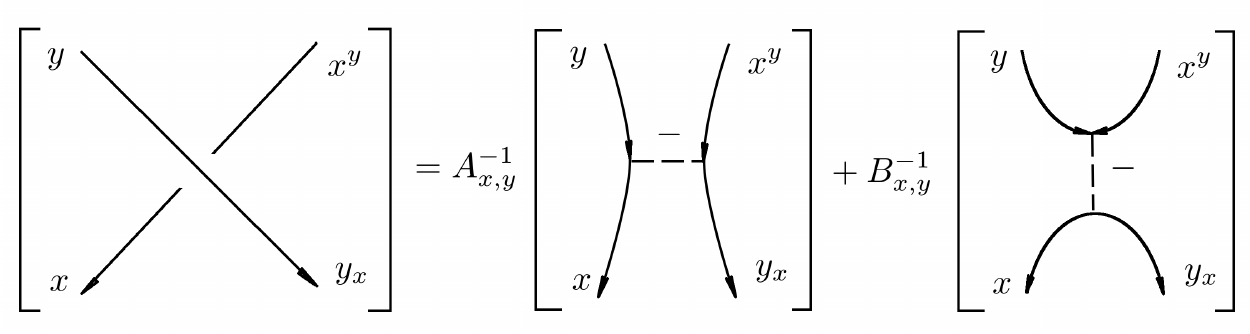}\]
and
\item[(ii)] If $D$ is a trace diagram with no crossings, then
$[D]=w^{n-p}\delta^k$ where $n$ is the number of negative traces, 
$p$ is the number of positive traces and $k$ is the number
of components (i.e., simple closed curves) in the diagram obtained 
by deleting all the of the traces in $D$.
\end{itemize}
\end{definition}

\begin{observation}\label{ob:1}
If $D$ and $D'$ are two $X$-colored trace diagrams which 
\begin{itemize}
\item Are identical outside a neighborhood $N$,
\item Have the same connectivity on the boundary of $N$, i.e., points of 
$\partial N$ which are connected by strands inside $D$ after traces are deleted
are connected inside $D'$ after traces are deleted and points of $\partial N$ 
which are not connected inside $D$ after traces are deleted are not connected 
inside $D'$ after traces are deleted, and
\item Have equal contributions of coefficients, $\delta$s and $w$s,
\end{itemize} 
then $[D]=[D']$.
\end{observation}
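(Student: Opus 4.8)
The plan is to evaluate both $[D]$ and $[D']$ by applying rule~(i) of the preceding definition to every crossing until none remain, reducing each to a signed sum of trace-only diagrams evaluated by rule~(ii). Concretely this writes $[D]=\sum_{s}\bigl(\prod_j C_j\bigr)w^{\,n-p}\delta^{\,k}$, summed over all states $s$, where each $C_j\in\{A_{x,y},B_{x,y}\}$ is the coefficient of the smoothing chosen at crossing $j$, and $n,p,k$ are read off from the resulting trace-only diagram as in rule~(ii). First I would isotope $N$ so that $\partial N$ meets $D$ transversally in ordinary strand points, with no crossing or trace lying on $\partial N$; then every crossing and every trace is wholly inside or wholly outside $N$. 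This localizes the expansion: a state is a pair $(\sigma,\tau)$ of an outside choice $\sigma$ and an inside choice $\tau$, the coefficient product factors as $C_\sigma C_\tau$, and because no trace meets $\partial N$ the writhe factor splits as $w^{\,n-p}=w^{\,n_\sigma-p_\sigma}\,w^{\,n_\tau-p_\tau}$.

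The delicate step is the exponent $k$, the number of simple closed curves remaining after all traces are deleted. I would sort these curves into three families: those lying entirely outside $N$, those lying entirely inside $N$, and those crossing $\partial N$ and therefore alternating between outside and inside arcs. The first two families contribute factors $\delta^{\,k_\sigma}$ and $\delta^{\,k_\tau}$ attaching to the outside and inside data respectively, while the third is governed purely by how the inside arcs pair up the finite set $\partial N\cap D$ against how the outside arcs pair it: the number of spanning curves equals the number of cycles in the union of these two pairings. Grouping inside states by the boundary pairing $M_\tau$ they induce, I would package the inside data into a single element $V_{\mathrm{in}}(M)=\sum_{\tau:\,M_\tau=M}C_\tau\,w^{\,n_\tau-p_\tau}\delta^{\,k_\tau}$, giving the factorized form $[D]=\sum_\sigma C_\sigma\,w^{\,n_\sigma-p_\sigma}\delta^{\,k_\sigma}\sum_M V_{\mathrm{in}}(M)\,\delta^{\,c(\sigma,M)}$, where $c(\sigma,M)$ counts the spanning cycles determined by the outside pairing of $\sigma$ and the inside pairing $M$.

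With this formula the three hypotheses do precisely the required work. The identical-outside hypothesis forces the outside data $C_\sigma,\,w^{\,n_\sigma-p_\sigma},\,\delta^{\,k_\sigma}$ and the outside boundary pairing to agree for $D$ and $D'$; the equal-contributions hypothesis forces the inside package $V_{\mathrm{in}}$ to agree; and the same-connectivity hypothesis guarantees that the inside boundary pairings agree, so that for each common outside state the spanning count $c(\sigma,M)$ is identical. Comparing the two factorized sums term by term in $\sigma$ and $M$ then yields $[D]=[D']$. I expect the main obstacle to be making the spanning-curve count rigorous: one must show that $c(\sigma,M)$ depends only on the two boundary pairings and not on the embedded placement of the arcs, and one must reconcile the pairing recorded by the same-connectivity hypothesis (read off from $D$ with all traces deleted) with the pairings $M_\tau$ produced by the expansion inside $N$. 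This reconciliation, together with the transversality normalization of $\partial N$, is the one place where the second and third hypotheses must be used in tandem and where anything beyond bookkeeping is needed.
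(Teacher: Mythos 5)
The paper does not prove this Observation at all --- it is asserted as self-evident bookkeeping and immediately invoked (``It then follows easily that\dots''), so there is no argument of the authors' to measure yours against; what you have written is the missing argument, and it is essentially correct. Your factorization $[D]=\sum_\sigma C_\sigma w^{n_\sigma-p_\sigma}\delta^{k_\sigma}\sum_M V_{\mathrm{in}}(M)\,\delta^{c(\sigma,M)}$ isolates exactly the one nontrivial point the paper leaves implicit: that a boundary-spanning closed curve count depends only on the cycle structure of the union of the inside and outside boundary pairings, not on the embedding. Two caveats. First, your reading of the third hypothesis is doing the heavy lifting: the term-by-term comparison needs $V_{\mathrm{in}}^{D}(M)=V_{\mathrm{in}}^{D'}(M)$ for each pairing $M$ \emph{separately}, which is strictly stronger than equality of total contributions; since the hypothesis is informal, you should state this per-pairing reading as the intended meaning rather than pretend to derive it. Second, the connectivity hypothesis as written refers to $D$ with traces deleted but crossings intact, whereas your state-dependent pairings $M_\tau$ genuinely vary ($A$- versus $B$-smoothings connect the boundary differently), so the reconciliation you flag at the end is a real issue in full generality; it disappears in every application the paper actually makes (the trace-move expansions and the monochromatic identities), where the compared regions are already crossingless inside $N$ and each side carries a single pairing. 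With the per-pairing interpretation made explicit, your argument is complete and supplies a rigor the paper omits.
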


It then follows easily that:

\begin{proposition}
For any finite biquandle $X$ and $X$-colored oriented link diagram $L_f$, the
value $[L_f]\in R$ is unchanged by Reidemeister moves. In particular, we have 
$[L_f]=\beta(L_f)$.
\end{proposition}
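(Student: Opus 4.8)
The plan is to reduce the statement to the already-asserted invariance of the state-sum $\beta$ by proving the identity $[L_f]=\beta(L_f)$ directly from the recursive rules. First I would check that the recursion is well-defined and terminates. Rule (i) replaces a single crossing by an $R$-linear combination of two $X$-colored trace diagrams, each having one fewer crossing and one additional signed trace marking the smoothing site; iterating, any diagram with finitely many crossings is reduced after finitely many steps to crossing-free diagrams handled by rule (ii). The only thing to verify is that the final value is independent of the order in which crossings are resolved, which is essentially distributivity in the commutative ring $R$: resolving two crossings in either order produces the same four-term combination, with the same scalar coefficients and the same underlying smoothed diagrams. Observation~\ref{ob:1} is the tool that lets me treat rule (i) as a genuinely local operation, guaranteeing that the value attached to a resolved crossing depends only on the local picture and the boundary connectivity rather than on how the rest of the diagram happens to be drawn.

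Next I would carry out the bookkeeping that identifies the full expansion with the state-sum. Resolving every crossing of $L_f$ produces a sum indexed by the states $s$ of $L_f$, a state being a choice of $A$- or $B$-smoothing at each crossing exactly as in the definition of $\beta$. State $s$ contributes the scalar $\prod_{\mathrm{smoothings}}C_j$ (the product of the selected $A_{x,y}$'s and $B_{x,y}$'s) times $[D_s]$, where $D_s$ is the resulting crossing-free trace diagram. The crucial point is that both smoothings at a crossing leave behind a trace carrying the \emph{sign of that crossing}, so every $D_s$ has the same number $p$ of positive and $n$ of negative traces as the original diagram has positive and negative crossings. Rule (ii) then gives $[D_s]=w^{n-p}\delta^{k_s}$, where $k_s$ is the number of simple closed curves remaining after the traces are deleted, i.e. the number of loops of the smoothed state. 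Summing over states yields
\[
[L_f]=\sum_{s}\left(\prod_{\mathrm{smoothings}}C_j\right)w^{n-p}\delta^{k_s}
=w^{n-p}\sum_{\mathrm{states}}\left(\left(\prod_{\mathrm{smoothings}}C_j\right)\left(\prod_{\mathrm{components}}\delta\right)\right)=\beta(L_f).
\]

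With the identity in hand, Reidemeister invariance is immediate: the biquandle bracket axioms (i)--(iii) were imposed precisely so that $\beta(L_f)$ is unchanged under $X$-colored Reidemeister moves (see \cite{NOR}), and the equality $[L_f]=\beta(L_f)$ transfers this invariance to $[\ ]$. Alternatively, one could argue invariance directly and locally, expanding the crossings inside the neighborhood of each move on both sides, matching the weighted sums of trace diagrams via the bracket axioms, and discarding the unchanged exterior with Observation~\ref{ob:1}; this simply re-runs the computation establishing invariance of $\beta$. The step I expect to be the main obstacle is the sign-and-loop bookkeeping of the second paragraph: one must confirm from the conventions in rule (i) that each smoothing of a crossing produces a trace whose sign equals the sign of that crossing, so that the state-independent factor $w^{n-p}$ emerges correctly, and that the ``number of components after deleting traces'' in rule (ii) matches the loop count appearing in the state-sum. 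Once these conventions are pinned down the summation is purely mechanical, but lining up the writhe exponent and the $\delta$-exponent with the state-sum formula is exactly where a sign or off-by-one slip would most easily occur.
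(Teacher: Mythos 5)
Your argument is correct and is essentially the proof the paper intends: the paper offers no written proof, merely asserting that the proposition ``follows easily'' from Observation~\ref{ob:1}, and your reduction of the recursive expansion to the state-sum --- with the trace signs recovering the writhe factor $w^{n-p}$ and the components left after deleting traces recovering the $\delta$-exponent --- is exactly the bookkeeping the paper omits, after which invariance is inherited from $\beta$ as you say. No gap.
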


%

Thus, trace diagrams give us a way of doing skein expansion of biquandle 
bracket invariants recursively like we do with classical skein invariants
in a way that preserves the biquandle colorings. However, this is somewhat
unsatisfying since the main advantage of the recursive skein expansion
is the ability to smooth, then apply Reidemeister moves to simplify the diagram
before doing additional smoothings. While we \textit{can} perform 
Reidemeister moves on trace diagrams, so far we have only allowed genuine
Reidemeister moves not involving traces, i.e., moves we could have already 
performed prior to smoothing. Hence we ask, what happens when we move a strand 
of a trace diagram past a trace?

There are 16 possible oriented trace moves, each of which imposes one of two
possible sets of conditions on the biquandle bracket coefficients. 
We note that for each move involving a trace of type A
there is a corresponding move obtained by replacing the type A trace with 
a type B trace. 
\[\includegraphics{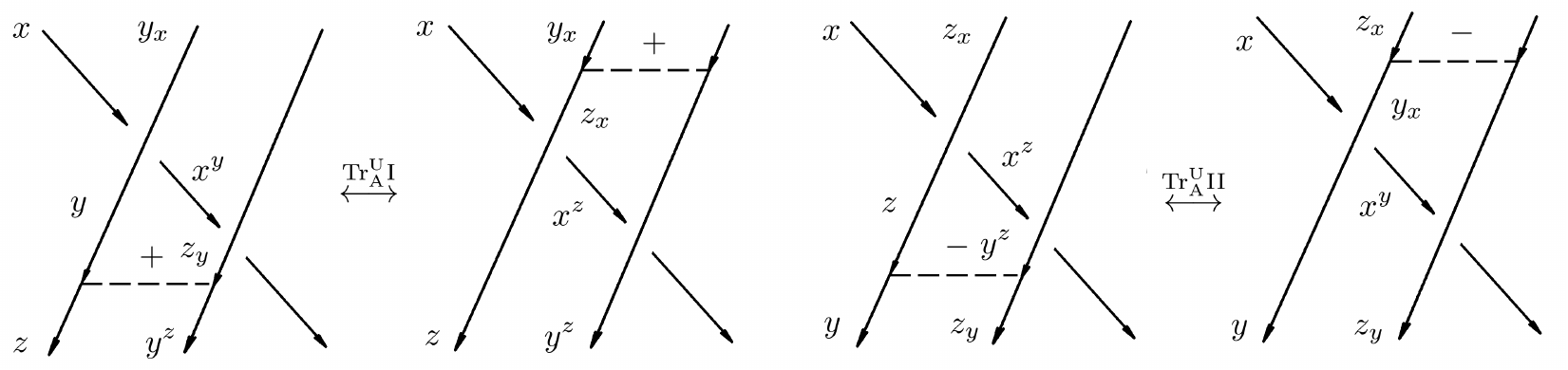}\]
\[\includegraphics{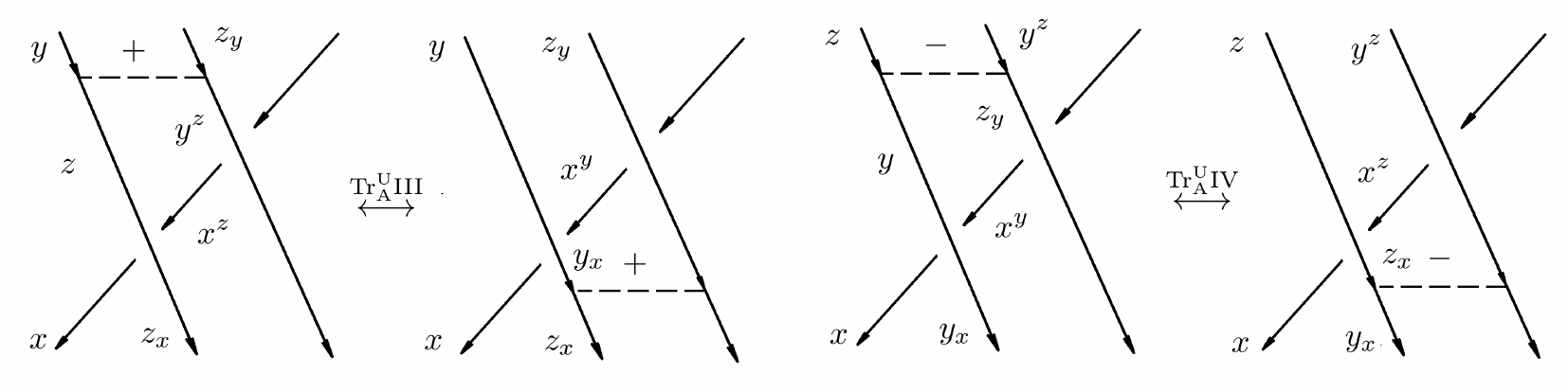}\]
\[\includegraphics{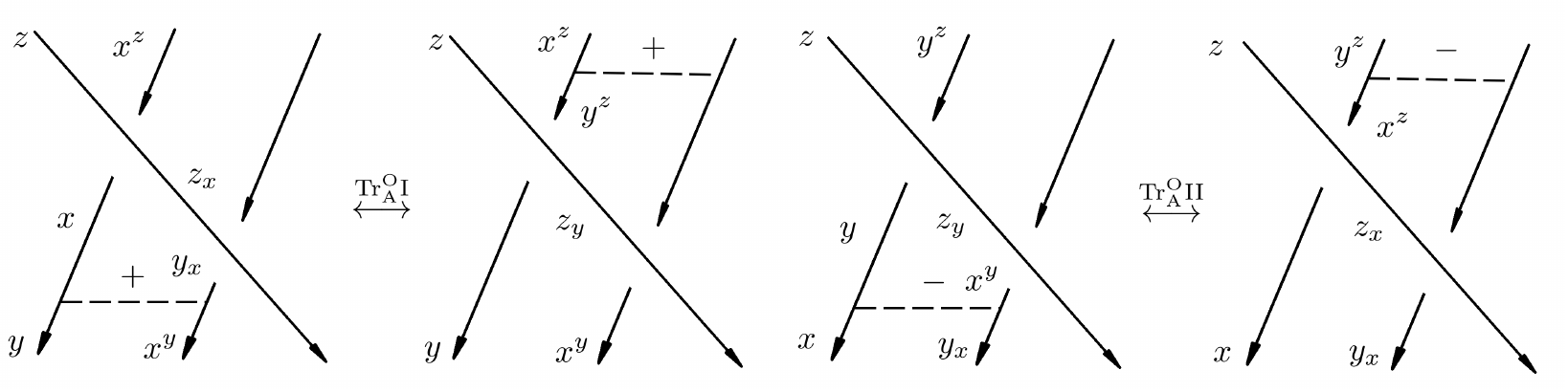}\]
\[\includegraphics{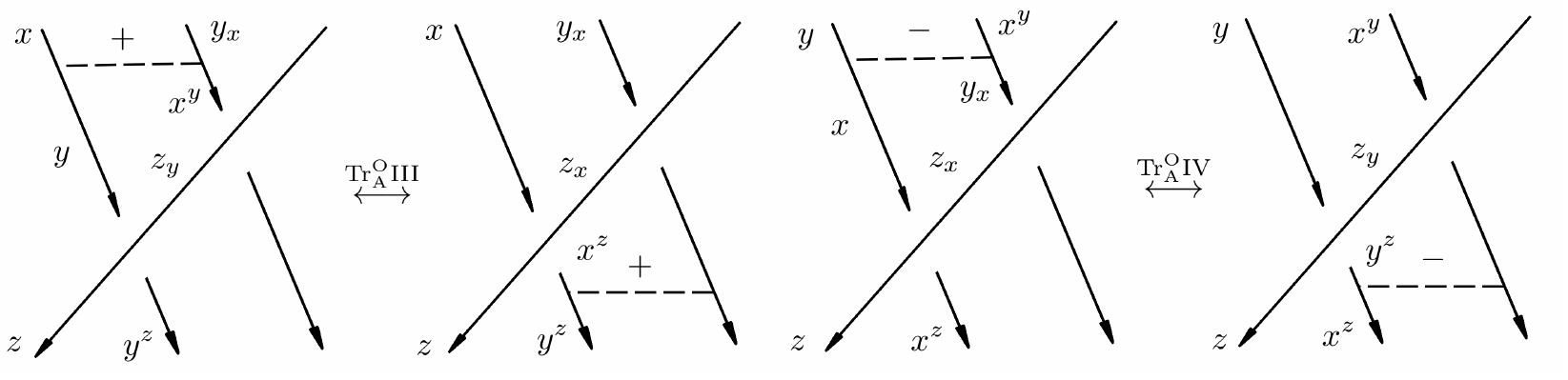}\]
\[\includegraphics{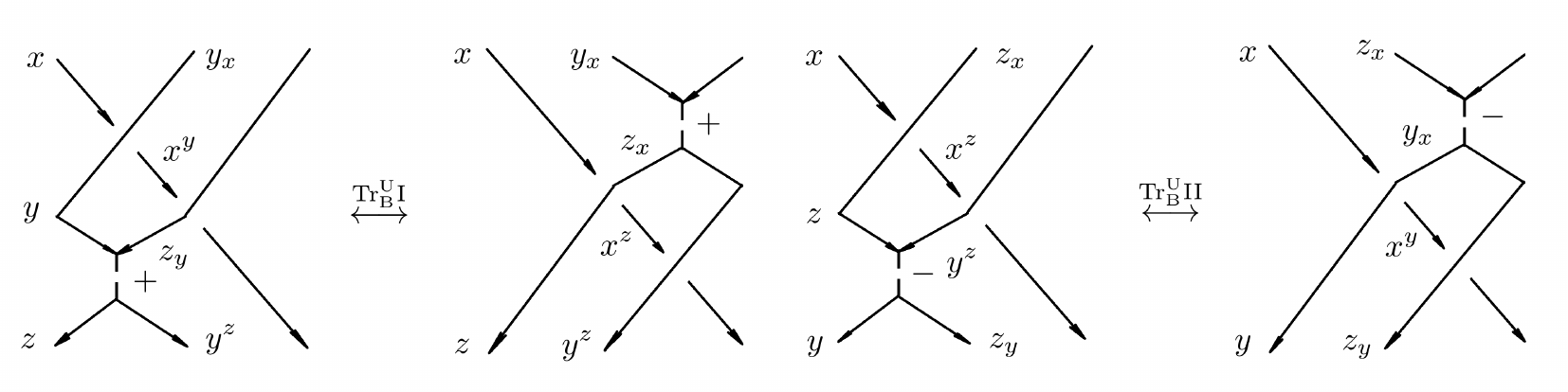}\]
\[\includegraphics{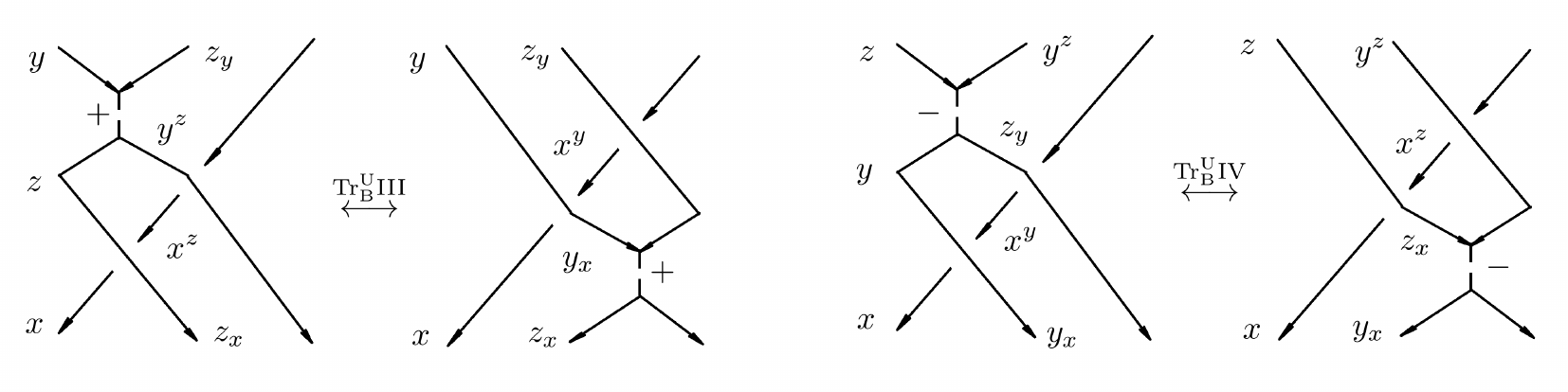}\]
\[\includegraphics{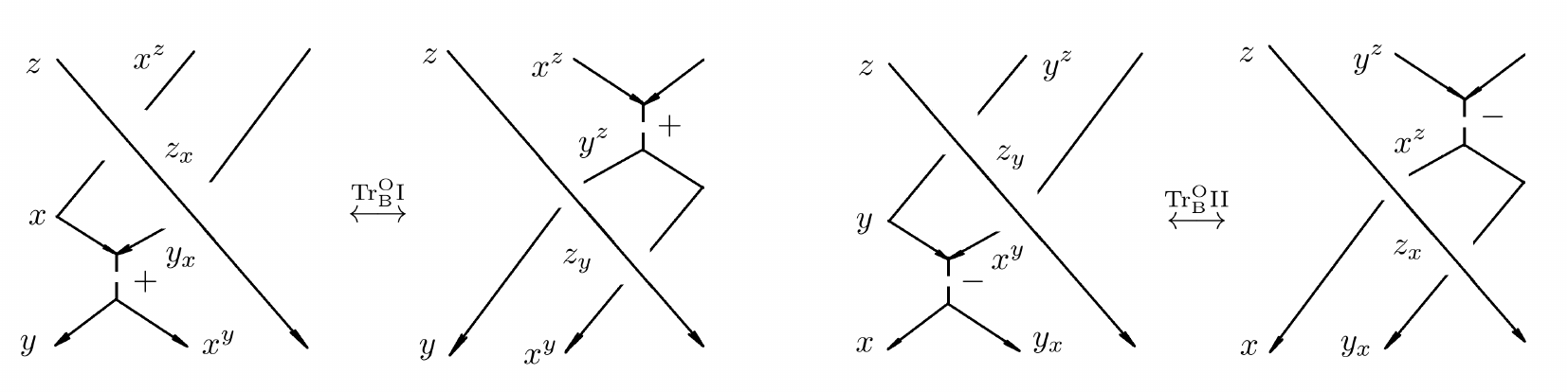}\]
\[\includegraphics{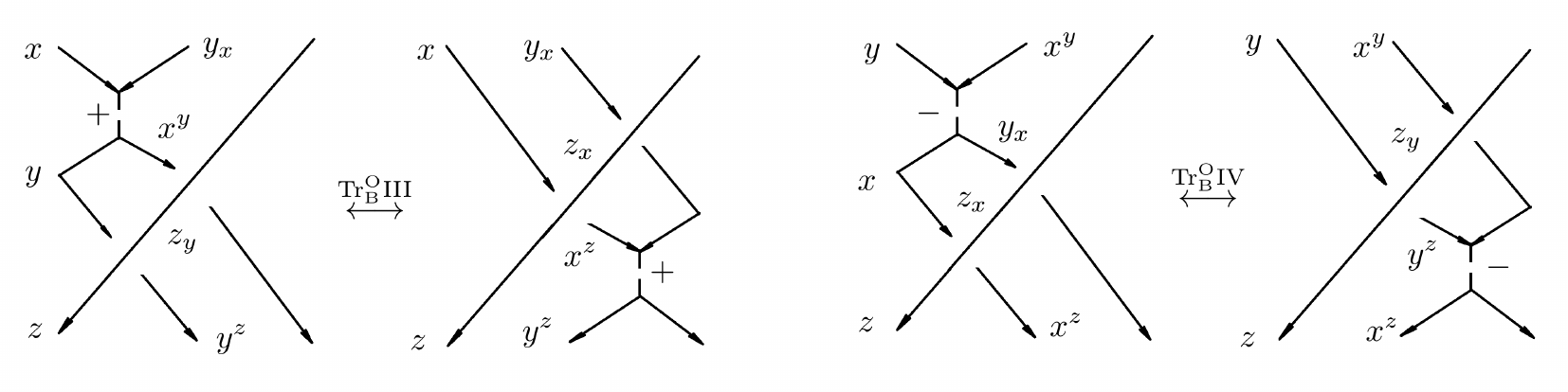}\]


\begin{theorem}
The necessary and sufficient conditions for moving a strand over a trace of 
type $A$ are the same as those for moving a strand over a trace of type $B$,
namely with colors $(x,y,z)$ as depicted above, the coefficients must 
satisfy the conditions
\[A_{x,y}=A_{x^z,y^z}\quad \mathrm{and}\quad
A_{y,z}B_{x^y,z_y} = B_{x,z}A_{y_x,z_x}=A_{x,z}B_{y_x,z_x}=B_{y,z}A_{x^y,z_y}.\]
The necessary and sufficient conditions for moving a strand under a trace of 
type $A$ are the same as those for moving a strand under a trace of type $B$, 
namely with colors $(x,y,z)$ as depicted above, the coefficients must 
the coefficients satisfy the conditions
\[A_{y,z}=A_{y_x,z_x}\quad \mathrm{and}\quad
A_{x,y}B_{x^y,z_y} = B_{x,z}A_{x^z,y^z}=A_{x,z}B_{x^z,y^z}=B_{x,y}A_{x^y,z_y}.\]
\end{theorem}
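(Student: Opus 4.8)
The plan is to reduce each trace move to the local comparison licensed by Observation \ref{ob:1}: the two diagrams $D$ and $D'$ related by an over-trace move agree outside a disk $N$ and have the same boundary connectivity, so it suffices to show that the contributions of coefficients, $\delta$s and $w$s inside $N$ agree. Inside $N$ the over-strand (color $z$) passes over the two strands meeting the trace, producing two genuine crossings together with the trace itself. First I would apply the skein rule (i) to expand these two crossings in each of $D$ and $D'$, writing each side as a sum of four crossingless trace diagrams. On the $D$-side the two crossings carry colors $(y,z)$ and $(x^y,z_y)$, so the four states have coefficients $A_{y,z}A_{x^y,z_y}$, $A_{y,z}B_{x^y,z_y}$, $B_{y,z}A_{x^y,z_y}$, $B_{y,z}B_{x^y,z_y}$; on the $D'$-side the crossings carry colors $(x,z)$ and $(y_x,z_x)$, giving the analogous four products. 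The trace is left untouched and contributes the same sign, hence the same $w$-power, on both sides.

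Next I would sort the four states on each side by the boundary connectivity they induce on $\partial N$ after traces are deleted, together with the number of closed loops created (which governs the $\delta$-factors). By Observation \ref{ob:1}, a $D$-state and a $D'$-state contribute equally exactly when they agree in connectivity and $w$-power, and $[D]=[D']$ amounts to the statement that, for each connectivity class, the total coefficient from $D$ equals that from $D'$. The all-$A$ states are the ones carrying the trace through unchanged---between colors $(x,y)$ on the $D$-side and $(x^z,y^z)$ on the $D'$-side---so matching them asks that $A_{y,z}A_{x^y,z_y}=A_{x,z}A_{y_x,z_x}$; invoking the first relation of bracket axiom (iii), $A_{x,y}A_{y,z}A_{x^y,z_y}=A_{x,z}A_{y_x,z_x}A_{x^z,y^z}$, this is equivalent to the single condition $A_{x,y}=A_{x^z,y^z}$. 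Matching the remaining mixed and all-$B$ states, again using the relevant lines of axiom (iii) to absorb the $\delta$-terms, is what produces the four-fold equality $A_{y,z}B_{x^y,z_y}=B_{x,z}A_{y_x,z_x}=A_{x,z}B_{y_x,z_x}=B_{y,z}A_{x^y,z_y}$. For sufficiency I would read the matching backwards: granting the two displayed conditions, every $D$-state is paired with $D'$-contributions of equal total weight in each connectivity class, so Observation \ref{ob:1} yields $[D]=[D']$.

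Two reductions then finish the theorem. Type $A$ and type $B$ traces differ only in the global way the trace's two endpoints are connected (parallel pass-through versus sink-to-source); they share the same sign, hence the same $w$-contribution, and the two genuine crossings and their coefficients are literally the same in both cases. Since Observation \ref{ob:1} sees only boundary connectivity and the coefficient, $\delta$ and $w$ data, the system of equations is identical for the two trace types, giving the first sentence of the theorem. The undercrossing statement follows by the same term-by-term analysis applied to the configuration in which the moving strand passes under rather than over: here the moving strand plays the role previously played by the over-strand, the trace now sits at the $(y,z)$-crossing, the genuine crossings carry colors $(x,y),(x^y,z_y)$ and $(x,z),(x^z,y^z)$, and the all-$A$ comparison (through axiom (iii)) yields $A_{y,z}=A_{y_x,z_x}$ while the mixed states yield $A_{x,y}B_{x^y,z_y}=B_{x,z}A_{x^z,y^z}=A_{x,z}B_{x^z,y^z}=B_{x,y}A_{x^y,z_y}$.

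The step I expect to be the main obstacle is the connectivity-and-loop bookkeeping in the second paragraph: correctly determining, for each of the four smoothing combinations on each side, which boundary points of $N$ become connected and how many closed loops appear, and hence which coefficient products must be equated and which $\delta$-factors occur. This is the same combinatorial core that underlies the derivation of bracket axiom (iii), and the subtle point is seeing precisely how the presence of the \emph{unexpanded} trace collapses the full five-relation axiom system down to the lighter conditions stated---in particular, how the all-$A$ comparison, by way of axiom (iii), yields the bare equality $A_{x,y}=A_{x^z,y^z}$ rather than a product relation, and how the mixed states distribute across connectivity classes so as to force the full four-fold equality rather than merely an equality of sums.
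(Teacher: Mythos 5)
Your analysis of the type A moves follows essentially the paper's route: expand the two genuine crossings on each side of the move, compare coefficients of crossingless diagrams grouped by boundary connectivity, and use the biquandle bracket axioms to compress the resulting equations into the two displayed conditions. But there is a fatal gap in your third paragraph, where you claim that because Observation \ref{ob:1} only sees boundary connectivity and coefficient data, ``the system of equations is identical for the two trace types.'' It is not. A type A trace joins two parallel pass-through strands while a type B trace joins a bivalent sink to a bivalent source, so after the trace is deleted the two configurations connect the boundary points of $N$ differently; the same pair of smoothings therefore lands in different connectivity classes and produces different closed-loop counts in the two cases. Concretely, for the type A under-move the four states on each side pair up one-to-one and yield the $\delta$-free equations $A_{x,y}A_{x^y,z_y}=A_{x,z}A_{x^z,y^z}$, $A_{x,y}B_{x^y,z_y}=A_{x,z}B_{x^z,y^z}$, etc., whereas for the type B under-move only one state on each side has a counterpart on the other, and the remaining three collapse into a single class whose total coefficient must vanish, giving equations such as $0=A_{x,y}A_{x^y,z_y}+\delta B_{x,y}A_{x^y,z_y}+B_{x,y}B_{x^y,z_y}$. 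The assertion that types A and B impose the same conditions is precisely the nontrivial content of the theorem's ``are the same as'' clauses; the paper proves it by deriving each system separately and showing algebraically, via the $\delta$-involving bracket axiom and the identity $\delta=-A_{x,y}B_{x,y}^{-1}-A_{x,y}^{-1}B_{x,y}$, that the two systems are equivalent. Your argument assumes the conclusion.

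A second, smaller gap concerns the four-fold equality in the type A case. Direct state comparison only yields the two separate equalities $A_{x,y}B_{x^y,z_y}=A_{x,z}B_{x^z,y^z}$ and $B_{x,y}A_{x^y,z_y}=B_{x,z}A_{x^z,y^z}$; there are no $\delta$-terms to ``absorb'' at this stage, and the missing link $A_{x,z}B_{x^z,y^z}=B_{x,z}A_{x^z,y^z}$ does not come from matching any pair of states. In the paper it is obtained by substituting the already-derived relations into the fourth bracket axiom (the one with the $\delta$-term) and then cancelling using the formula for $\delta$. That computation is the algebraic core of the proof and is absent from your sketch.
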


\begin{proof} We consider the case of undercrossing moves; the overcrossing
case is similar.
Consider the move $Tr_A^UI$ with the strands colored as depicted.
\[\includegraphics{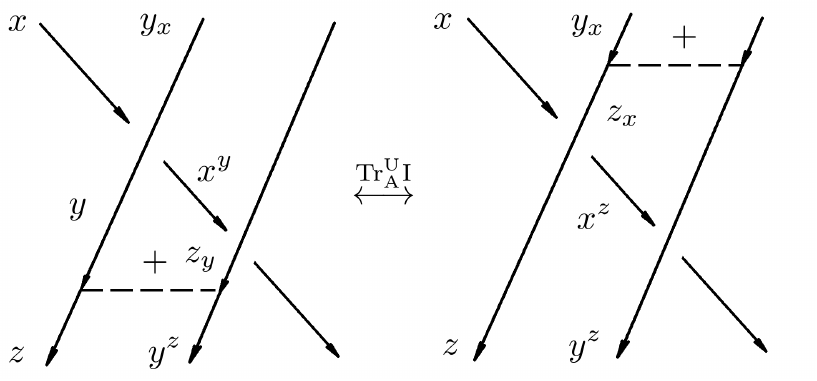}\]
The two sides expand to 
\[\scalebox{0.95}{\includegraphics{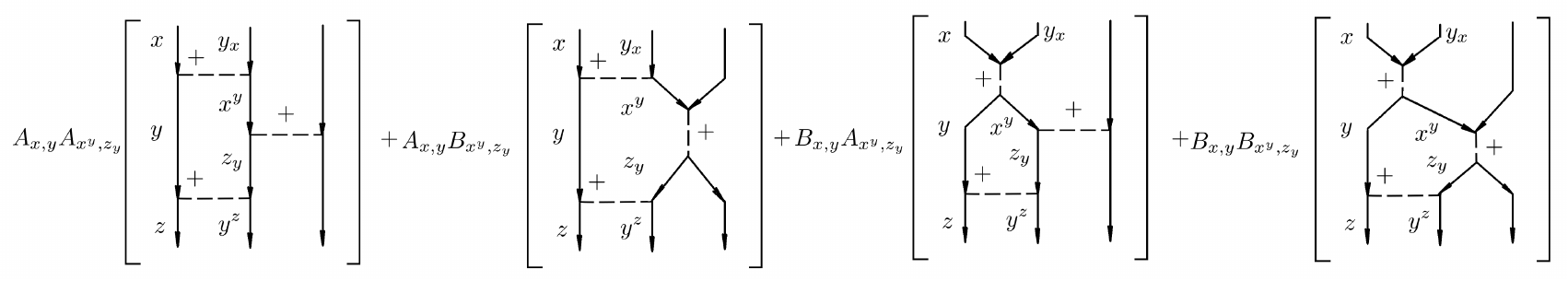}}\]
and
\[\scalebox{0.95}{\includegraphics{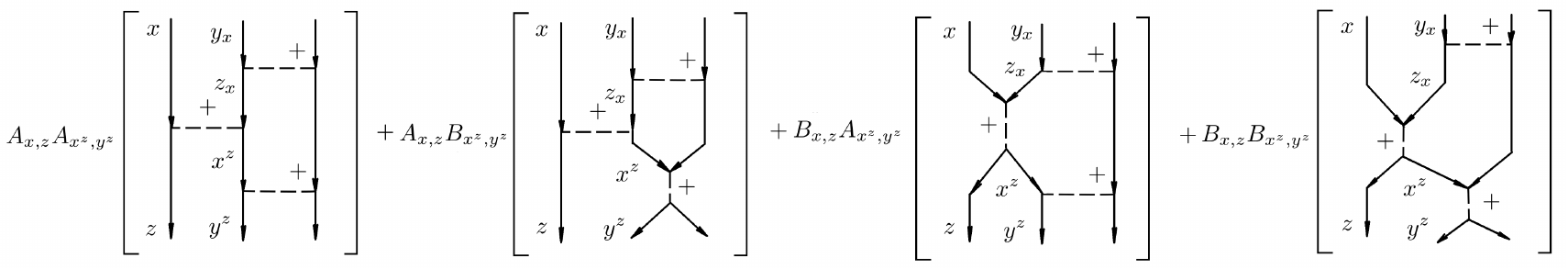}}.\]
Comparing coefficients of diagrams with the same boundary connectivity and
writhe information, we obtain the requirements that for all $x,y,z\in X$ we have
\[\begin{array}{rcll}
A_{x,y}A_{x^y,z_y} & = & A_{x,z}A_{x^z,y^z} & (i)\\
A_{x,y}B_{x^y,z_y} & = & A_{x,z}B_{x^z,y^z} & (ii)\\
B_{x,y}A_{x^y,z_y} & = & B_{x,z}A_{x^z,y^z} & (iii)\ \ \mathrm{and}\\
B_{x,y}B_{x^y,z_y} & = & B_{x,z}B_{x^z,y^z} & (iv).
\end{array}\]
Suppose we have a biquandle bracket satisfying $(i)$ through $(iv)$.
Equations (i) and (iv) are equivalent via the biquandle 
bracket axioms $A_{x,y}A_{y,z}A_{x^y,z_y} = A_{x,z}A_{y_x,z_x}A_{x^z,y^z}$
and $B_{x,y}A_{y,z}B_{x^y,z_y} = B_{x,z}A_{y_x,z_x}B_{x^z,y^z}$
to $A_{y,z}=A_{y_x,z_x}$.
To see that the left sides of equations (ii) and (iii) represent
equal elements of $R$, consider the biquandle bracket axiom 
\[A_{x,y}A_{y,z}B_{x^y,z_y} =A_{x,z}B_{y_x,z_x}A_{x^z,y^z} +A_{x,z}A_{y_x,z_x}B_{x^z,y^z}   +\delta A_{x,z}B_{y_x,z_x}B_{x^z,y^z} +B_{x,z}B_{y_x,z_x}B_{x^z,y^z}.\] 
Since $A_{y,z}=A_{y_x,z_x}$ and 
$A_{x,y}B_{x^y,z_y} = A_{x,z}B_{x^z,y^z}$, we have
$A_{x,y}A_{y,z}B_{x^y,z_y} =A_{x,z}A_{y_x,z_x}B_{x^z,y^z}$
and our equation reduces to
\[-\delta A_{x,z}B_{y_x,z_x}B_{x^z,y^z}  
=A_{x,z}B_{y_x,z_x}A_{x^z,y^z} +B_{x,z}B_{y_x,z_x}B_{x^z,y^z}.\] 
Then multiplying through by $B_{y_x,z_x}^{-1}$ we have
\[-\delta A_{x,z}B_{x^z,y^z}  
=A_{x,z}A_{x^z,y^z} +B_{x,z}B_{x^z,y^z}.\] 
Next, noting that $\delta=-A_{x,z}^{-1}B_{x,z}-A_{x,z}B_{x,z}^{-1}$, we have
\[B_{x,z}B_{x^z,y^z}  +B_{x,z}^{-1}A_{x,z}^2B_{x^z,y^z}=A_{x,z}A_{x^z,y^z} +B_{x,z}B_{x^z,y^z}\] 
which implies
\[B_{x,z}^{-1}A_{x,z}^2B_{x^z,y^z} = A_{x,z}A_{x^z,y^z}\]
whence
\[A_{x,z}B_{x^z,y^z} =  B_{x,z}A_{x^z,y^z}.\]
The other undercrossing type A moves yield the same equations; hence, the
undercrossing moves require and are satisfied by the conditions
\[A_{y,z}=A_{y_x,z_x}\] and
\[A_{x,y}B_{x^y,z_y} = B_{x,z}A_{x^z,y^z}=A_{x,z}B_{x^z,y^z}=B_{x,y}A_{x^y,z_y}.\]

For undercrossing type B moves, expanding the
two sides of the move $\mathrm{Tr}^U_BI$ as shown,
\[\includegraphics{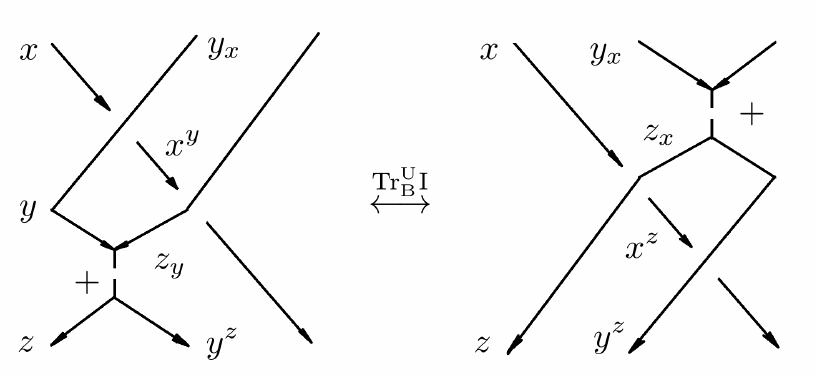}\]
we obtain
\[\scalebox{0.95}{\includegraphics{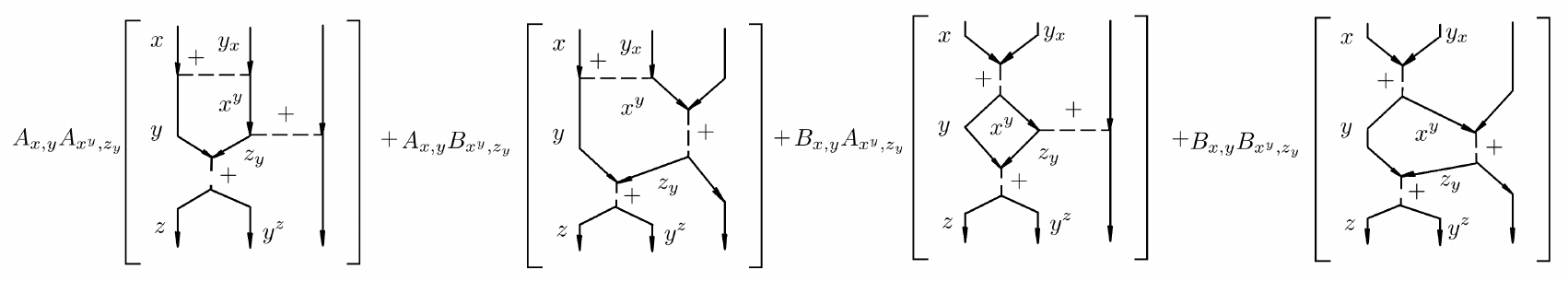}}\]
and
\[\scalebox{0.95}{\includegraphics{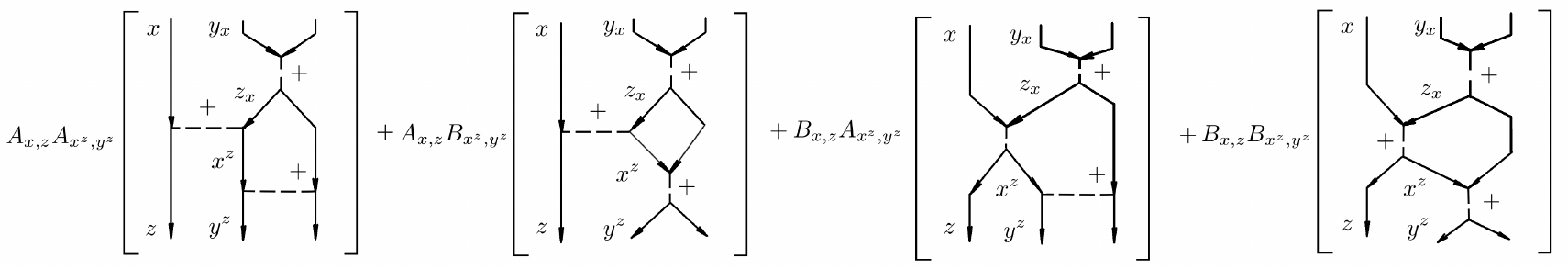}}.\]
Comparing coefficients, we obtain the requirements that
\begin{eqnarray*}
A_{x,y}B_{x^y,z_y}& = & B_{x,z}A_{z^x,y^x}\\
0 & = & A_{x,y}A_{x^y,z_y} +\delta B_{x,y}A_{x^y,z_y} + B_{x,y}B_{x^y,z_y}\\
0 & = & A_{x,z}A_{x^z,y^z} +\delta A_{x,z}B_{x^z,y^z} + B_{x,z}B_{x^z,y^z}
\end{eqnarray*}
Recalling that $\delta=-A_{x,y}B_{x,y}^{-1}-A_{x,y}^{-1}B_{x,y}$, the second
equation is equivalent to
\begin{eqnarray*}
(A_{x,y}B_{x,y}^{-1}+A_{x,y}^{-1}B_{x,y}) B_{x,y}A_{x^y,z_y}  & = & A_{x,y}A_{x^y,z_y} + B_{x,y}B_{x^y,z_y}\\
A_{x,y}A_{x^y,z_y}+A_{x,y}^{-1}B_{x,y}^2A_{x^y,z_y}  & = & A_{x,y}A_{x^y,z_y} + B_{x,y}B_{x^y,z_y}\\
A_{x,y}^{-1}B_{x,y}^2A_{x^y,z_y}  & = & B_{x,y}B_{x^y,z_y}\\
B_{x,y}A_{x^y,z_y}  & = & A_{x,y}B_{x^y,z_y}
\end{eqnarray*}
and similarly the third equation is equivalent to
$A_{x,z}B_{x^z,y^z}=B_{x,z}A_{x^z,y^z}$.
Then move $Tr_B^UI$ requires and is satisfied by the conditions
\[A_{x,y}B_{x^y,z_y} = B_{x,z}A_{x^z,y^z}=A_{x,z}B_{x^z,y^z}=B_{x,y}A_{x^y,z_y}.\]
Then $A_{x,z}B_{x^z,y^z}=B_{x,z}A_{x^z,y^z}$ implies 
$B_{x,z}^{-1}A_{x,z}^2B_{x^z,y^z} = A_{x,z}A_{x^z,y^z}$ which implies
\[B_{x,z}B_{x^z,y^z}  +B_{x,z}^{-1}A_{x,z}^2B_{x^z,y^z}=A_{x,z}A_{x^z,y^z} +B_{x,z}B_{x^z,y^z}\]
and then
\[0=A_{x,z}A_{x^z,y^z} +\delta A_{x,z}B_{x^z,y^z} +B_{x,z}B_{x^z,y^z}\]
so
\[0 =A_{x,z}A_{y_x,z_x}B_{x^z,y^z}   +\delta A_{x,z}B_{y_x,z_x}B_{x^z,y^z} +B_{x,z}B_{y_x,z_x}B_{x^z,y^z}.\] 
Comparing with the biquandle bracket axiom
\[A_{x,y}A_{y,z}B_{x^y,z_y} =A_{x,z}B_{y_x,z_x}A_{x^z,y^z} +A_{x,z}A_{y_x,z_x}B_{x^z,y^z}   +\delta A_{x,z}B_{y_x,z_x}B_{x^z,y^z} +B_{x,z}B_{y_x,z_x}B_{x^z,y^z},\] 
we obtain $A_{x,y}A_{y,z}B_{x^y,z_y} =A_{x,z}A_{y_x,z_x}B_{x^z,y^z}$
and since $A_{x,y}B_{x^y,z_y} = A_{x,z}B_{x^z,y^z}$, we obtain
$A_{y,z}=A_{y_x,z_x}$ and the type $B$ undercrossing conditions imply the 
type $A$ undercrossing conditions.
\end{proof}

\begin{definition}
A biquandle bracket $\beta$ over a biquandle $X$ is \textit{adequate}
for a trace move if it satisfies the algebraic conditions associated 
with the move. A biquandle bracket is 
\begin{itemize}
\item \textit{Over-Adequate} if it is adequate for all overcrossing
trace moves, i.e., if for all $x,y,z\in X$ we have
\[A_{y,z}=A_{y^x,z^x}\quad \mathrm{and}\quad
A_{y,z}B_{x^y,z_y} = B_{x,z}A_{y_x,z_x}=A_{x,z}B_{y_x,z_x}=B_{y,z}A_{x^y,z_y},\]
\item \textit{Under-Adequate} if it is adequate for all undercrossing
trace moves, i.e., if for all $x,y,z\in X$ we have 
\[A_{y,z}=A_{y_x,z_x}\quad \mathrm{and}\quad
A_{x,y}B_{x^y,z_y} = B_{x,z}A_{x^z,y^z}=A_{x,z}B_{x^z,y^z}=B_{x,y}A_{x^y,z_y},\]
and
\item \textit{Adequate} if it is both over- and under-adequate.
\end{itemize}
\end{definition}


\begin{example}
Constant brackets where $A_{x,y}=A$ and $B_{x,y}=B$ such as those defining
the classical skein invariants are adequate.
\end{example}

If a biquandle bracket is over-adequate, then we can freely move strands  
over traces during the skein expansion
without changing the biquandle 
bracket value. If a bracket is under adequate, we can move strands under 
traces. If a bracket is adequate, we can move strands both over and under,
matching the uncolored skein expansion case.

\begin{example} Biquandle brackets come in all four global types.
Consider the biquandle $X$ defined by the operation matrix
\[\left[\begin{array}{rrr|rrr}
3 & 1 & 3 & 3 & 3 & 3 \\
2 & 2 & 2 & 2 & 2 & 2 \\
1 & 3 & 1 & 1 & 1 & 1
\end{array}\right]\] 
Our Python computations indicate that the brackets over $\mathbb{Z}_5$ below
are adequate, over-adequate, under-adequate, and neither respectively:
\[\begin{array}{cccc}
\left[\begin{array}{rrr|rrr}
1 & 1 & 1 & 2 & 2 & 2 \\
2 & 1 & 2 & 4 & 2 & 4 \\
1 & 1 & 1 & 2 & 2 & 2
\end{array}\right] &
\left[\begin{array}{rrr|rrr}
1 & 3 & 1 & 2 & 4 & 2 \\
1 & 4 & 1 & 2 & 2 & 2 \\
1 & 3 & 1 & 2 & 4 & 2
\end{array}\right] &
\left[\begin{array}{rrr|rrr}
1 & 2 & 1 & 3 & 1 & 3 \\
2 & 4 & 2 & 4 & 3 & 4 \\
1 & 2 & 1 & 3 & 1 & 3
\end{array}\right] &
\left[\begin{array}{rrr|rrr}
1 & 2 & 4 & 2 & 1 & 3 \\
1 & 4 & 4 & 2 & 2 & 3 \\
4 & 2 & 1 & 3 & 1 & 2
\end{array}\right] \\
\mathrm{Adequate} &
\mathrm{Over-Adequate} &
\mathrm{Under-Adequate} &
\mathrm{Neither} 
\end{array}\]
\end{example}

Of what use are trace diagrams? Coupled with the following proposition, they
can be used to simplify the computation of biquandle bracket invariants analogously
to how classical skein invariants can be computed by applying the skein relation
to reduce to a linear combination of brackets of unlinks.
First, we need a few definitions.

\begin{definition}
A crossing in a bivalent spatial graph diagram is \textit{single-component} 
if both its 
over-crossing and under-crossing strands are in the same component of the link;
a crossing is \textit{multi-component} if its over-crossing and under-crossing 
strands lie on different components of the link.
\end{definition}

Next we define the \textit{magnetic parity} of a crossing in a trace diagram
by thinking analogusly with magnetic graphs like those in \cite{KM}:

\begin{definition}
Let $D$ be a trace diagram and $c$ a single-component crossing of the
diagram obtained by deleting traces. The \textit{magnetic parity}
of $c$ is the parity (even or odd) of the number of orientation-reversing 
vertices between the over- and under-crossing points of $x$ on the component of
$D$ containing $c$ obtained by deleting traces.
\end{definition}

Considering the orientations of strands yields the following:

\begin{lemma}
The magnetic parity of a single-component crossing determines how it closes
as shown:
\[\includegraphics{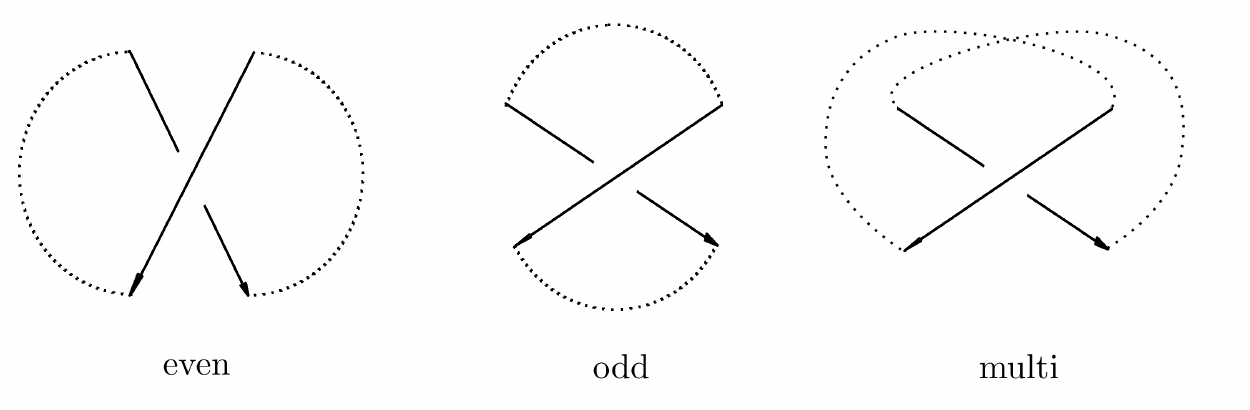}\]
A multicomponent crossing must have even magnetic parity.
\end{lemma}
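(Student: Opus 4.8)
The plan is to work entirely in the diagram $D'$ obtained from $D$ by deleting all of its traces, and to track how the given edge-orientations interact with the vertices of $D'$. First I would observe that the only orientation-reversing vertices of $D'$ are the images of the type $B$ trace endpoints: a crossing or a type $A$ (pass-through) trace endpoint carries the strand straight through and so preserves orientation, whereas a type $B$ endpoint becomes a bivalent sink or source, at which the direction of travel reverses relative to the edge-orientation. Since each type $B$ trace contributes exactly one sink and one source, these vertices occur in pairs. The key preliminary fact is then that every closed component of $D'$ contains an even number of orientation-reversing vertices: traversing a component, the ``travelling with the orientation / travelling against it'' status toggles exactly at a sink or a source and nowhere else, so returning to the starting edge forces an even number of toggles, and in fact the sinks and sources must alternate along the component.

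With this in hand, well-definedness of the magnetic parity is immediate. For a single-component crossing $c$, the two arcs of its component running from the over-point to the under-point together comprise the entire component, so their numbers of orientation-reversing vertices have the same parity; hence the magnetic parity is independent of which arc is used. To read off how $c$ closes, I would follow the orientation along one such arc, starting at the over-point aligned with the over-strand: each orientation-reversing vertex flips this alignment, so upon arrival at the under-point the relative orientation of the two strands meeting at $c$ is governed precisely by the parity of the count. Comparing the two parities against the two local models then reproduces exactly the two closure types displayed in the figure.

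For the multicomponent case the aim is to show that the crossing presents in the same configuration as an even single-component crossing. The subtlety here, and the step I expect to be the main obstacle, is that when the over- and under-strands lie on distinct components $K_1$ and $K_2$ there is no arc inside a single component along which to count orientation-reversing vertices, so the counting argument of the previous paragraph yields no information and the conclusion must be extracted globally. The approach I would take is to analyze the oriented local model at $c$ directly and argue that the distinctness of the two components rules out the odd closure: I would attempt to show, using the even orientation-reversal count of each $K_i$ together with the planarity of $D'$, that completing the odd local configuration to two closed oriented curves is impossible without merging $K_1$ and $K_2$ into one component, contradicting that $c$ is multicomponent. Pinning down exactly which global orientation datum the odd configuration obstructs—so that it genuinely cannot be realized by two separate closed curves, in contrast to the single-component case where both parities occur—is the crux of the argument.
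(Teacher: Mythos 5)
The paper does not actually supply a proof of this lemma---it is introduced only with the remark that ``considering the orientations of strands yields the following''---so your argument has to stand on its own. Your treatment of the single-component case does: the observations that the only orientation-reversing vertices of the trace-deleted diagram are the sinks and sources left by type $B$ traces, that these toggle the with/against-orientation status of a traversal and hence occur an even number of times on any closed component, and that the two arcs joining the over- and under-points partition that component's reversals, together give both the well-definedness of the parity and the dichotomy of closure types. This is exactly the orientation-tracking argument the paper gestures at.

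The gap is the multicomponent case, which you flag as ``the crux'' and leave unresolved, proposing a planarity-based global obstruction argument. No such argument is needed, and planarity is a red herring: the claim follows immediately from your own preliminary fact. If the over- and under-strands of $c$ lie on distinct components $K_1$ and $K_2$, then the two endpoints of the over-strand are joined by the arc $K_1$ minus the over-strand, which runs from the outgoing end of an oriented strand back to its own incoming end; your toggling argument (start travelling with the orientation, arrive travelling with the orientation) forces an even number of orientation-reversing vertices on that arc. Equivalently, that arc carries all of $K_1$'s reversals, of which there are evenly many. The same holds for the under-strand in $K_2$. So each strand closes head-to-tail through its own component with an even reversal count, which is precisely the ``even'' configuration in the figure, while the ``odd'' (head-to-head, tail-to-tail) closure would require an arc joining two outgoing ends and is excluded for each $K_i$ separately. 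There is no need to rule out any global configuration of two closed curves in the plane.
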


We observe that trace moves and Reidemeister moves do not change the magnetic 
parity of a crossing, and that magnetic parity is well-defined for 
single-component crossings.

\begin{proposition}\label{ns}
Let $D$ be a trace diagram such that deleting the traces results in an unlink
$U$ which can be reduced to a zero-crossing diagram by Reidemeister I moves.
At each crossing $c$ define a weight $\phi(c)$ according to the following table:
\begin{center}
\begin{tabular}{|c|c|c|c|c|c|}\hline
Crossing & Parity & $\phi(x)$  &Crossing & Parity & $\phi(x)$ \\ \hline
\raisebox{-0.375in}{
\includegraphics{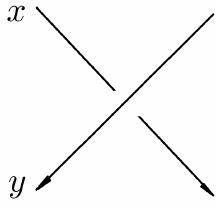}} & Odd & $A_{x,y}+\delta B_{x,y}$ &
\raisebox{-0.375in}{
\includegraphics{sn-no1-46.pdf}} & Even & $\delta A_{x,y}+B_{x,y}$ \\ \hline
\raisebox{-0.375in}{
\includegraphics{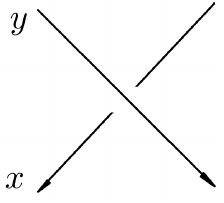}} & Odd & $A_{x,y}^{-1}+\delta B_{x,y}^{-1}$ &
\raisebox{-0.375in}{
\includegraphics{sn-no1-47.pdf}} & Even & $\delta A_{x,y}^{-1}+B_{x,y}^{-1}$ \\ \hline
\end{tabular}\end{center}
Then $[D]$ is given by 
\[[D]=\delta^kw^{n-p}\prod \phi(c) \]
where the product runs over the crossings $c$ of $D$, $k$ is the
number of components of $U$ and $n-p$ is the number of
negative signed traces and crossings minus the number of positive signed 
traces and crossings in $D$.
\end{proposition}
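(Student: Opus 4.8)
The plan is to induct on the number of crossings of $D$, peeling off one crossing at a time with the recursive skein rule (i) and bottoming out at the stop condition (ii). For the base case, when $D$ has no crossings the product $\prod\phi(c)$ is empty and rule (ii) gives $[D]=w^{n-p}\delta^{k}$ directly, where $k$ is the number of components of $U=D$ (traces deleted) and $n-p$ counts the signed traces; this is exactly the asserted formula. Before starting the induction I would note that the hypothesis forces every crossing of $D$ to be single-component: an inter-component crossing can never be undone by Reidemeister I moves, so since $U$ reduces to the zero-crossing diagram by R1 alone there are no such crossings. Hence the magnetic parity, and with it $\phi(c)$, is defined at every crossing by the lemma above.

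For the inductive step I would fix an R1 reduction sequence for $U$ and let $c$ be the self-crossing removed by its first move, so that $c$ is a removable curl. Applying rule (i) to $c$ writes $[D]$ as a sum of two terms with coefficients $A_{x,y},B_{x,y}$ (if $c$ is positive) or $A_{x,y}^{-1},B_{x,y}^{-1}$ (if $c$ is negative); crucially, each resolution replaces $c$ by a signed trace of the same sign as $c$, so that the quantity $n-p$ --- which in the proposition counts traces and crossings together --- is unchanged by the smoothing. Because $c$ is a removable curl, the magnetic parity lemma pins down how its two resolutions close: one of them produces the trace diagram $D_1$ whose traces-deleted image is $U$ with the first curl removed (one fewer crossing, the same $k$ components), while the other produces $D_1$ together with a disjoint trivial circle. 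An extra disjoint circle multiplies the bracket by $\delta$, as is immediate from rules (i), (ii) and Observation \ref{ob:1}, so the second term equals $\delta[D_1]$.

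Collecting the two terms gives $[D]=(A_{x,y}+\delta B_{x,y})[D_1]$ or $[D]=(\delta A_{x,y}+B_{x,y})[D_1]$ in the positive case (and the same with $A_{x,y}^{-1},B_{x,y}^{-1}$ in the negative case), the two options corresponding precisely to whether the $B$- or the $A$-resolution buds off the circle. Reading this dichotomy against the magnetic parity lemma identifies the scalar as exactly $\phi(c)$ from the table. Now $D_1$ again satisfies the hypothesis --- its traces-deleted image is reducible by the remaining R1 moves --- has one fewer crossing, the same component count $k$, and the same $n-p$, so the inductive hypothesis gives $[D_1]=\delta^{k}w^{n-p}\prod_{c'\neq c}\phi(c')$. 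Multiplying by $\phi(c)$ yields the claimed formula for $[D]$.

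The main obstacle is the geometric bookkeeping inside the inductive step: one must verify that at a removable curl exactly one resolution strips the crossing cleanly while the other buds off a trivial circle, and that which case occurs is governed by the sign and magnetic parity exactly as tabulated. This is where the magnetic parity lemma does the real work. The point requiring care is that the traces lying on the strands near $c$ are inert under traces-deletion, so the closure behavior of the resolutions is controlled entirely by the underlying curl and not by the decorations; once this is checked, one must also confirm that smoothing $c$ leaves the diagram inside the class covered by the inductive hypothesis, which follows since removing the first R1 curl from $U$ leaves an unlink reducible by the remaining R1 moves.
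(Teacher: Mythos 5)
Your proposal is correct and follows essentially the same route as the paper's proof: expand a removable (innermost) self-crossing via the recursive skein rule, use the magnetic parity lemma to decide which resolution buds off a disjoint circle contributing a factor of $\delta$, read off the factor $\phi(c)$, and repeat down to the crossingless stop condition. Your write-up is simply a more explicit induction, with added (and correct) bookkeeping of $n-p$, the single-component observation, and the reducibility of the smaller diagram, all of which the paper leaves implicit.
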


\begin{proof}
Consider the case of a positive crossing.
Without loss of generality we can expand innermost crossings first, i.e. select
a crossing with no other crossings along one arc between its over and under 
instances. Then if this arc contains an odd number of orientation reversals, 
we have
\[\includegraphics{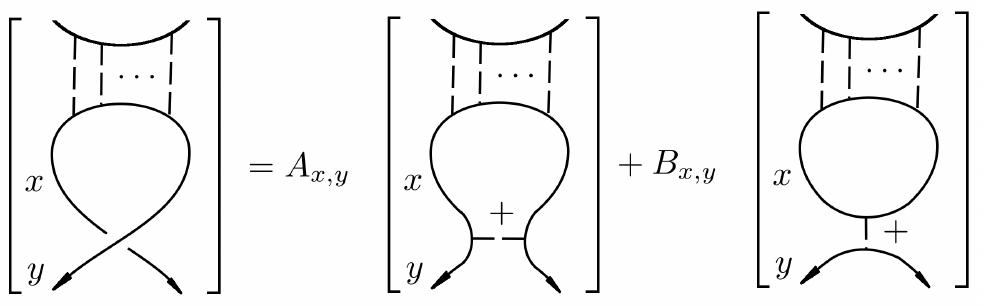}\]
yielding a coefficient contribution of $A_{x,y}+\delta B_{x,y}$
and if it has an even number, we have
\[\includegraphics{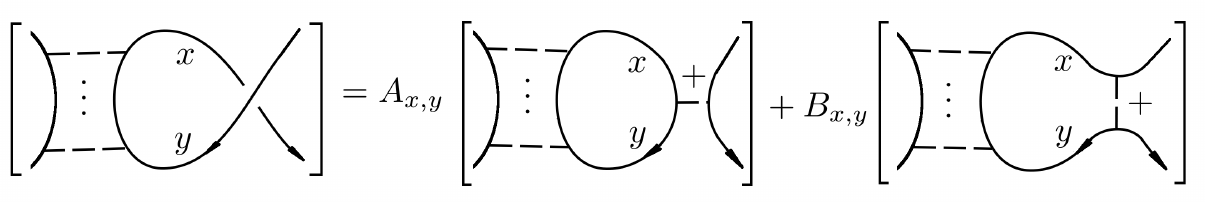}\]
yielding a coefficient contribution of $\delta A_{x,y}+B_{x,y}$.
The negative crossing case is analogous.
\end{proof}

\begin{example}
Consider the trace diagram
\[\includegraphics{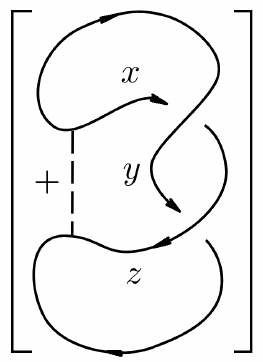}\]
The crossings $c_1$ and $c_2$ both have odd magnetic parity and are both positive 
crossings, so we have $\phi(c_1)=A_{x,y}+\delta B_{x,y}$ and
$\phi(c_1)=A_{y,z}+\delta B_{y,z}$, and we have $C=1$, $n=0$ and $p=3$.
Then expanding via the state sum, we have
\begin{eqnarray*}
\raisebox{-0.4 in}{\scalebox{0.7}{
\includegraphics{sn-no1-49.pdf}}} &= &
A_{x,y}A_{y,z}\raisebox{-0.4 in}{\scalebox{0.7}{\includegraphics{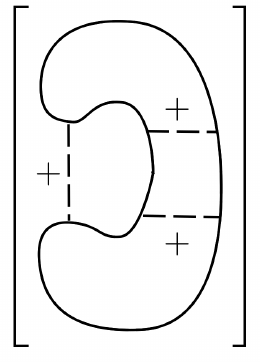}}}
+A_{x,y}B_{y,z}\raisebox{-0.4 in}{\scalebox{0.7}{\includegraphics{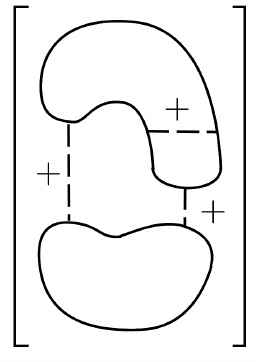}}}
+B_{x,y}A_{y,z}\raisebox{-0.4 in}{\scalebox{0.7}{\includegraphics{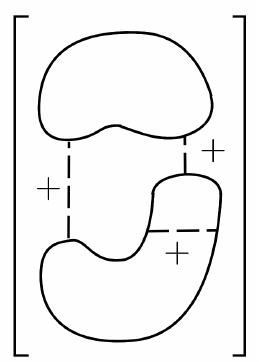}}}
+B_{x,y}B_{y,z}\raisebox{-0.4 in}{\scalebox{0.7}{\includegraphics{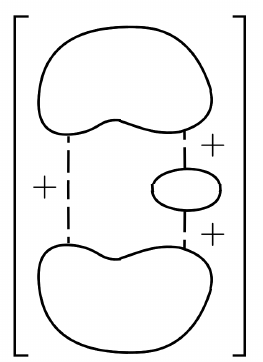}}}\\
& = & 
A_{x,y}A_{y,z}\delta w^{-3}+
A_{x,y}B_{y,z}\delta^2 w^{-3}+
B_{x,y}A_{y,z}\delta^2 w^{-3}+
B_{x,y}B_{y,z}\delta^3 w^{-3}\\
& = & 
A_{x,y}(A_{y,z}+B_{y,z}\delta)\delta w^{-3}+
B_{x,y}(A_{y,z}+B_{y,z}\delta)\delta^2 w^{-3}\\
& = & (A_{x,y}+\delta B_{x,y})(A_{y,z}+B_{y,z}\delta)\delta w^{-3}\\
& = & \phi(c_1)\phi(c_2)\delta^1w^{-3}\\
\end{eqnarray*}
as expected.
\end{example}

\section{\Large\textbf{Monochromatic Moves}}\label{M}

In this section we describe a few moves on trace diagrams in which 
the input colors for the move are all the same. First, we have a 
Homflypt-style skein relation at monochromatic crossings.

Recall that the skein relation for the Homflypt polynomial relates
the invariant of an oriented knot or link with a specified positive
crossing to those of the same knot with the positive crossing replaced with
a negative crossing and with an oriented smoothing. Suppose we have 
a \textit{monochromatic} crossing, i.e.  a crossing in which the two 
left-hand biquandle colors are equal -- biquandle axiom (i) then implies that 
the right-hand colors are equal, and after smoothing, the coloring makes sense
for the oriented smoothing even without the trace. The following lemma 
follows easily from observation \ref{ob:1}. 

\begin{lemma}\label{lem:1}
Let $X$ be a biquandle, $R$ a commutative ring with identity and $\beta$ an 
$X$-bracket over $R$. Then at monochromatic crossings we have the following
identities:
\[\includegraphics{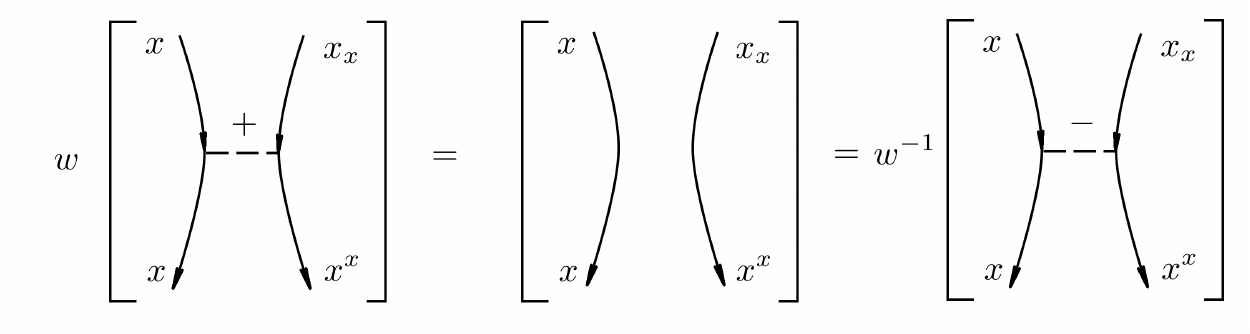}\]and
\[\includegraphics{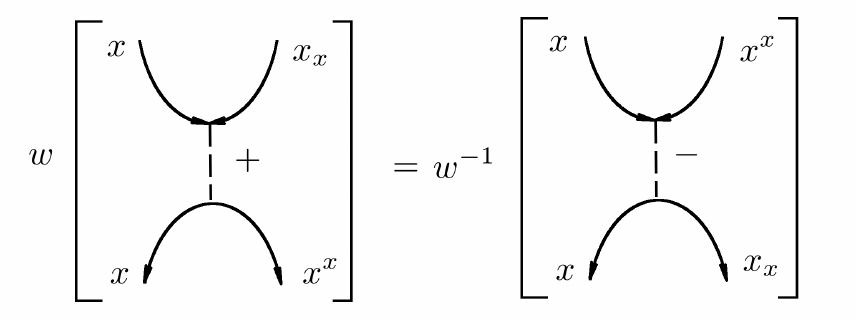}\]
\end{lemma}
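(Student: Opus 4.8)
The plan is to derive each of the two displayed identities by applying the recursive skein rules in the definition of $[\ ]$ to the single indicated monochromatic crossing and then cleaning up the resulting terms with Observation \ref{ob:1}. I would begin with the positive crossing, both of whose incoming semiarcs carry the color $x$. Biquandle axiom (i), namely $x\utr x = x\otr x$, forces the two outgoing semiarcs to share a common color as well, so that the oriented smoothing of the crossing already carries a consistent $X$-coloring. Expanding the crossing by rule (i) of the definition of $[\ ]$ then produces exactly two terms: the oriented smoothing weighted by $A_{x,x}$ and the disoriented smoothing (with its trace) weighted by $B_{x,x}$.

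The heart of the argument is to simplify the oriented-smoothing term. A priori this term still records the smoothing site with a type~A trace joining the two parallel $x$-colored strands, but since those strands share the color $x$ the diagram obtained by erasing that trace is again a validly colored trace diagram. I would compare the two diagrams --- with and without the trace --- by checking the three hypotheses of Observation \ref{ob:1}: they agree outside a neighborhood of the trace; deleting all traces from each yields the same underlying curves, hence the same boundary connectivity; and their coefficient and $\delta$ contributions coincide. The sole discrepancy is the sign carried by the single erased trace, which is absorbed into the global writhe factor $w^{n-p}$. It is precisely this bookkeeping that upgrades the informal statement ``the coloring makes sense for the oriented smoothing even without the trace'' into an equality of $[\ ]$-values, giving the first identity.

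For the second identity I would run the identical argument on a negative crossing, where rule (i) now contributes the inverse units $A_{x,x}^{-1}$ and $B_{x,x}^{-1}$ and the surviving trace carries the opposite sign; Observation \ref{ob:1} again identifies the oriented-smoothing term with its trace-free version and, applied to the disoriented-smoothing terms of the two identities, shows that these contribute equal $[\ ]$-values, which is what will later allow the trace term to cancel in the Homflypt-style relation.

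I expect the only genuine difficulty to be the sign-and-writhe bookkeeping rather than anything conceptual. One must verify that erasing the type~A trace shifts the quantity $n-p$ by exactly one so that the correct power of $w$ is retained, and one must confirm that the connectivity hypothesis of Observation \ref{ob:1} really does hold --- which it does, because that hypothesis is phrased in terms of the curves remaining after all traces are deleted, so the presence or absence of the single monochromatic trace is irrelevant to it. Once these normalizations are pinned down the two identities follow immediately, which is why the lemma follows easily from Observation \ref{ob:1}.
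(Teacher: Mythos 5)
Your proposal is correct and takes essentially the same approach as the paper, which offers no written proof beyond the remark that the lemma ``follows easily'' from Observation \ref{ob:1}: expanding the monochromatic crossing by the defining skein rules and then invoking Observation \ref{ob:1} to trade the trace-bearing oriented smoothing for its trace-free version, at the cost of one factor of $w^{\pm 1}$ in the writhe correction, is precisely the intended argument. The only point to watch is your claim that the disoriented-smoothing terms of the two identities contribute ``equal'' $[\ ]$-values: since they carry traces of opposite sign they differ by the factor $w^{\pm 2}$, which is exactly the writhe bookkeeping you already flag as the one delicate step.
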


Combining the skein relations at a positive and negative crossing, we obtain
\begin{proposition}\label{homfly}
Let $X$ be a biquandle, $\beta$ an $X$-bracket and $L_f$ be an $X$-colored
oriented link diagram. Then at any monochromatic crossing, $[\ ]$ satisfies
the following Homflypt-style skein relation:
\[\scalebox{0.85}{\includegraphics{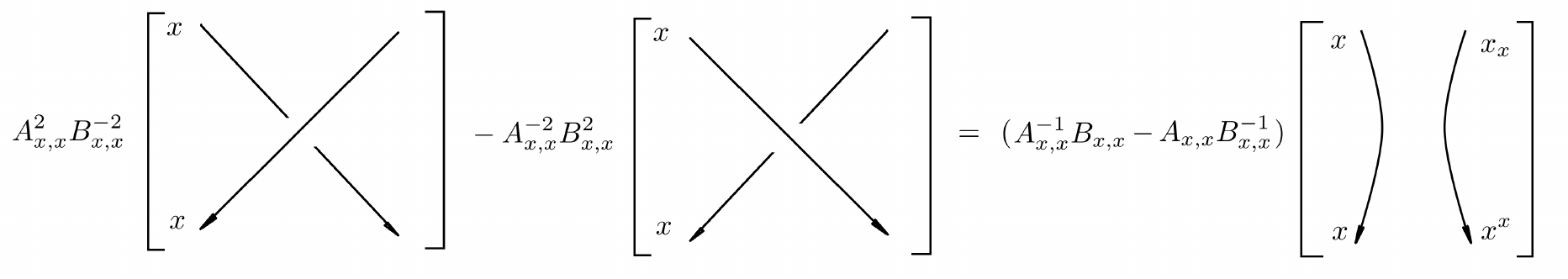}}\]
or equivalently
\[\scalebox{0.85}{\includegraphics{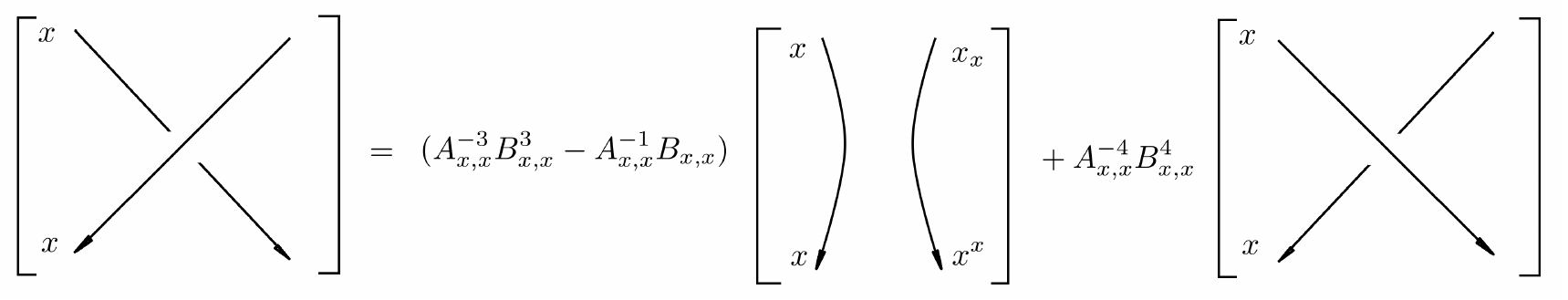}}\]
\end{proposition}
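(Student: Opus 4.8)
The plan is to derive the Homflypt-style skein relation by combining the two monochromatic identities of Lemma \ref{lem:1} and eliminating the trace-decorated terms between them. Since the crossing is monochromatic, biquandle axiom (i) guarantees that both the incoming and outgoing colors agree on the two strands, so the oriented smoothing carries a well-defined coloring even after the trace is deleted; this is precisely the geometric content that makes a skein relation (rather than a mere trace-expansion) sensible here.

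First I would write out the two identities explicitly. At a monochromatic positive crossing with common color $x$, Lemma \ref{lem:1} expresses $[L_+]$ in terms of the oriented smoothing $[L_0]$ and a trace-decorated term, with coefficients built from $A_{x,x}$ and $B_{x,x}$; similarly the second identity expresses $[L_-]$ in terms of $[L_0]$ and the analogous trace-decorated term with coefficients $A_{x,x}^{-1}$ and $B_{x,x}^{-1}$ (these inverses arising because the negative crossing smooths with the inverse bracket coefficients, as in the skein relations of Section \ref{B}). The key observation is that the trace-decorated diagram appearing in both identities is the same object up to the writhe correction, so it can be treated as a single unknown.

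Next I would eliminate the common trace term. Taking an appropriate $R$-linear combination of the two identities — scaling $[L_+]$ by a factor involving $B_{x,x}^{-1}$ and $[L_-]$ by a factor involving $B_{x,x}$, or more symmetrically multiplying each identity by the bracket coefficient that clears the trace term from the other — cancels the decorated diagram and leaves a relation purely among $[L_+]$, $[L_-]$, and $[L_0]$. Recalling the definitions $w=-A_{x,x}^2B_{x,x}^{-1}$ and $\delta=-A_{x,x}^{-1}B_{x,x}-A_{x,x}B_{x,x}^{-1}$ from the biquandle bracket axioms, the coefficients of $[L_+]$ and $[L_-]$ can be rewritten in terms of $w$ and $\delta$ and the residual coefficient $A_{x,x}$ (or $B_{x,x}$), and after collecting terms this produces exactly the displayed Homflypt-style relation. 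The two displayed forms in the statement should then correspond to the two natural normalizations (clearing the trace via $B_{x,x}$ versus via $A_{x,x}$), so verifying their equivalence is a matter of multiplying through by the appropriate unit.

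The main obstacle I anticipate is bookkeeping rather than conceptual: correctly tracking the writhe correction factors $w^{n-p}$ and the component-count powers of $\delta$ so that the trace-decorated terms in the two identities genuinely coincide and cancel, and confirming that the monochromatic assumption is used exactly where needed to equate the two oriented-smoothing diagrams. I would lean on Observation \ref{ob:1} to justify that diagrams agreeing in boundary connectivity, writhe data, and coefficient contributions have equal brackets, which is what licenses treating the shared trace term as a single cancellable quantity. Once the cancellation is set up cleanly, reducing to the stated relation and checking the equivalence of the two forms is routine algebra in $R$.
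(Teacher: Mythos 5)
Your proposal is correct and follows essentially the same route as the paper: expand the positive and negative monochromatic crossings via the defining skein relations, use the identities of Lemma \ref{lem:1} to convert the type-A traced terms into multiples of the oriented smoothing and to relate the two type-B traced terms (which differ only by a writhe factor), then eliminate the common traced diagram and simplify using $w=-A_{x,x}^2B_{x,x}^{-1}$. The only cosmetic difference is that the paper phrases the elimination as a substitution of the second identity rather than as an $R$-linear combination, and it does not need $\delta$ explicitly.
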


\begin{proof}
From the defining skein relations for $[\ ]$, the first identities of Lemma 
\ref{lem:1} and the fact that
$w=-A_{x,x}^2B_{x,x}^{-1}$ we have
\[\includegraphics{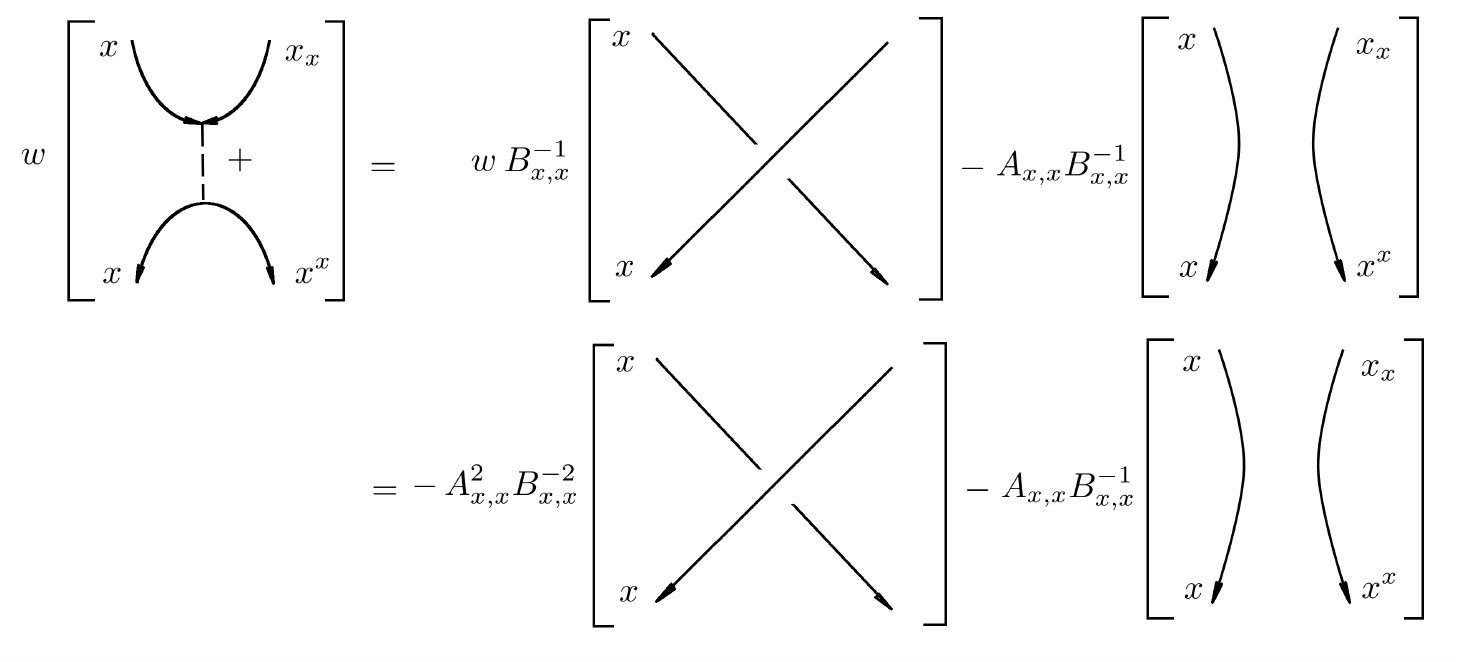}\]
and
\[\includegraphics{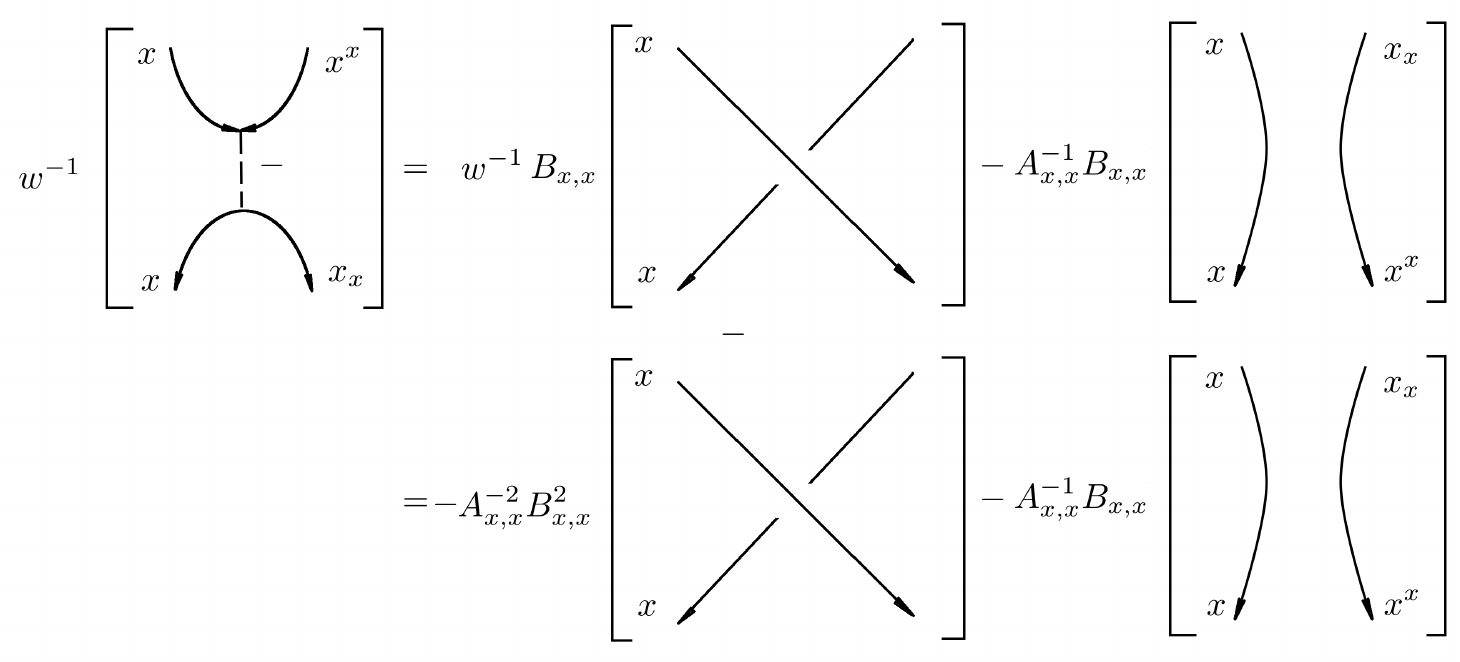}\]
Then substituting in the second equation of Lemma \ref{lem:1} above, we
obtain the result.
\end{proof}

We can sometimes use this skein relation to simplify the calculation of $[D]$ 
for diagrams with monochromatic crossings by reducing such diagrams to
unknots or unlinks.

\begin{example}
Let us compute $[D]$ for the biquandle coloring of the knot diagram
$\raisebox{-0.6in}{\scalebox{0.7}{\includegraphics{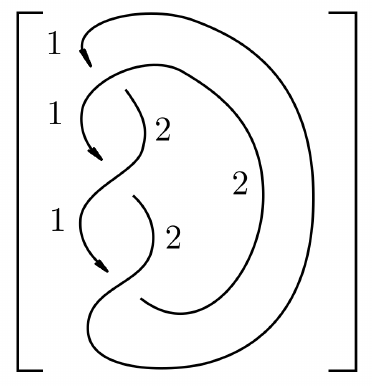}}}$
with a generic biquandle bracket 
in three ways: the state sum method, using trace diagrams with proposition
\ref{ns} and using the Homflypt-style skein relation from proposition
\ref{homfly}. 

With the state-sum method, there are eight states:
\[\scalebox{0.9}{\includegraphics{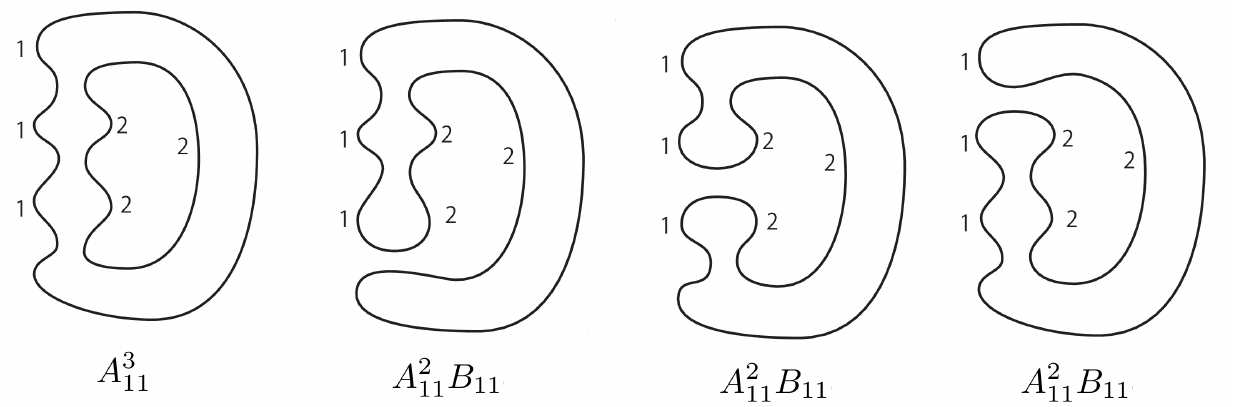}}\]
\[\scalebox{0.9}{\includegraphics{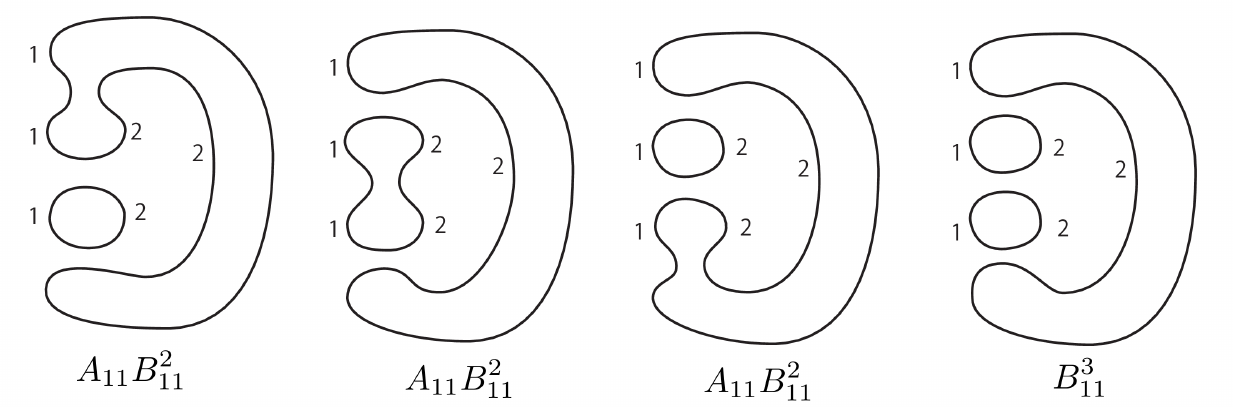}}\]
and we have
\begin{eqnarray*}
\raisebox{-0.6in}{\scalebox{0.8}{\includegraphics{sn-no1-60.pdf}}}
& = & A_{11}^{3}\delta^2 w^{-3} + A_{11}^{2}B_{11}\delta w^{-3}+A_{11}^{2}B_{11}\delta w^{-3}+A_{11}B_{11}^{2}\delta^{2}w^{-3}\\
& & +A_{11}^{2}B_{11} \delta w^{-3}+A_{11}B_{11}^{2}\delta^{2}w^{-3}+A_{11}B_{11}^{2}\delta^{2}w^{-3}+B_{11}^{3}\delta^{3}w^{-3}\\
& = &  3A_{11}^{2}B_{11}\delta w^{-3}+ (A_{11}^{3}+3A_{11}B_{11}^{2})\delta^{2} w^{-3}+B_{11}^{3}\delta^{3}w^{-3}\\
& = &  3A_{11}^{2}B_{11}(-A_{11}^{-1}B_{11}-A_{11}B_{11}^{-1})w^{-3}+ (A_{11}^{3}+3A_{11}B_{11}^{2})(-A_{11}^{-1}B_{11}-A_{11}B_{11}^{-1})^{2}w^{-3}\\
& &+B_{11}^{3}(-A_{11}^{-1}B_{11}-A_{11}B_{11}^{-1})^{3}w^{-3}\\
& = & -A_{11}^{-1}B_{11}^{1}-A_{11}^{-3}B_{11}^{3}-A_{11}^{-5}B_{11}^5+A_{11}^{-9}B_{11}^9.
\end{eqnarray*}

Using trace diagrams lets us reduce the number of diagrams we need:
\begin{eqnarray*}
\raisebox{-0.6in}{\scalebox{0.8}{\includegraphics{sn-no1-60.pdf}}}
& = & A_{11}\raisebox{-0.6in}{\scalebox{0.8}{\includegraphics{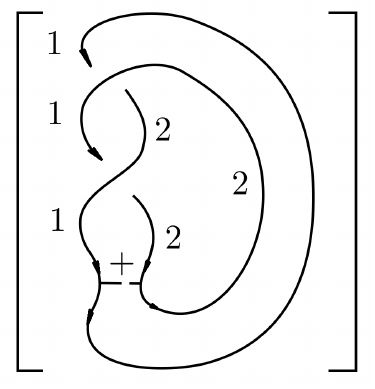}}}
+B_{11}\raisebox{-0.6in}{\scalebox{0.8}{\includegraphics{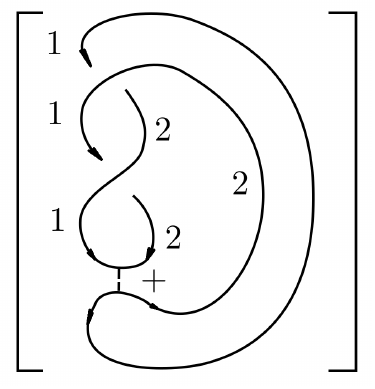}}} \\
& = & A_{11}^2\raisebox{-0.6in}{\scalebox{0.8}{\includegraphics{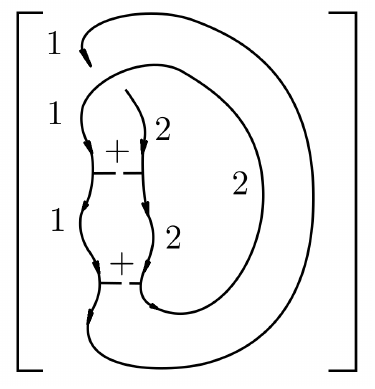}}}
+A_{11}B_{11}\raisebox{-0.6in}{\scalebox{0.8}{\includegraphics{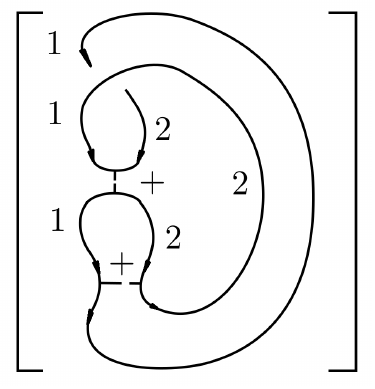}}}
+B_{11}(A_{11}+\delta B_{11})^2\delta w^{-3} \\
& = & A_{11}^2(\delta A_{11}+B_{11})\delta w^{-3}
+A_{11}B_{11}(A_{11}+\delta B_{11})\delta w^{-3}
+B_{11}(A_{11}+\delta B_{11})^2\delta w^{-3} \\
& = & (A_{11}^2(\delta A_{11}+B_{11})
+A_{11}B_{11}(A_{11}+\delta B_{11})
+B_{11}(A_{11}+\delta B_{11})^2)\delta w^{-3} \\
& = & (A_{11}^2(-A_{11}^2B_{11}^{-1})
+A_{11}B_{11}(-A_{11}^{-1}B_{11}^2)
+B_{11}(-A_{11}^{-1}B_{11}^2)^2)\delta w^{-3} \\
& = & (-A_{11}^4B_{11}^{-1}-B_{11}^3+A_{11}^{-2}B_{11}^5)\delta w^{-3} \\
& = & (-A_{11}^4B_{11}^{-1}-B_{11}^3+A_{11}^{-2}B_{11}^5)(-A_{11}^{-6}B_{11}^3)\delta \\
& = & (A_{11}^{-2}B_{11}^{2}+A_{11}^{-6}B_{11}^6-A_{11}^{-8}B_{11}^8)\delta \\
& = & -A_{11}^{-1}B_{11}^{1}-A_{11}^{-7}B_{11}^7+A_{11}^{-9}B_{11}^9-A_{11}^{-3}B_{11}^{3}-A_{11}^{-5}B_{11}^5+A_{11}^{-7}B_{11}^7 \\
& = & -A_{11}^{-1}B_{11}^{1}-A_{11}^{-3}B_{11}^{3}-A_{11}^{-5}B_{11}^5+A_{11}^{-9}B_{11}^9.
\end{eqnarray*}

Finally, using the Homflypt-style skein relation from proposition \ref{homfly},
we have
\begin{eqnarray*}
\raisebox{-0.6in}{\scalebox{0.8}{\includegraphics{sn-no1-60.pdf}}}
& = & (A_{11}^{-3}B_{11}^3-A_{11}^{-1}B_{11})
\raisebox{-0.6in}{\scalebox{0.8}{\includegraphics{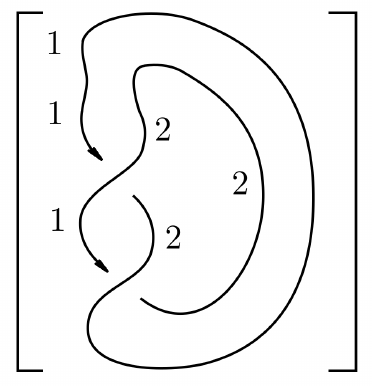}}}
+A_{11}^{-4}B_{11}^4
\raisebox{-0.6in}{\scalebox{0.8}{\includegraphics{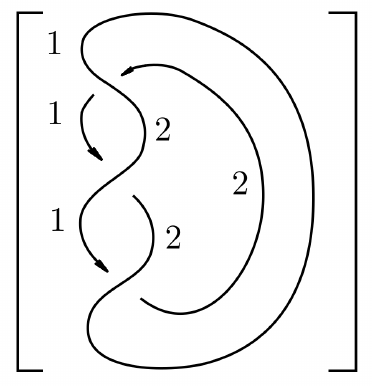}}}\\
& = & (A_{11}^{-3}B_{11}^3-A_{11}^{-1}B_{11})^2
\raisebox{-0.6in}{\scalebox{0.8}{\includegraphics{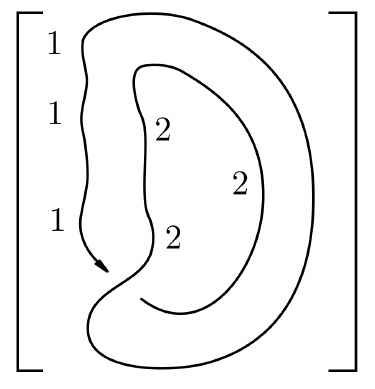}}}
+(A_{11}^{-7}B_{11}^7-A_{11}^{-5}B_{11}^4)
\raisebox{-0.6in}{\scalebox{0.8}{\includegraphics{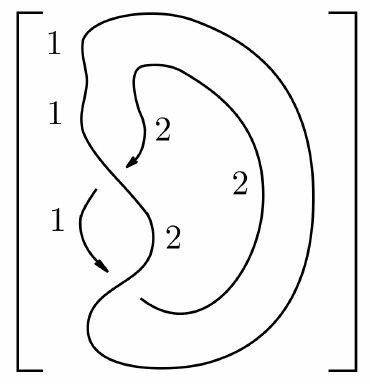}}}\\
& & +A_{11}^{-4}B_{11}^4\delta\\
&= & (A_{11}^{-3}B_{11}^3-A_{11}^{-1}B_{11})^2\delta
+(A_{11}^{-7}B_{11}^7-A_{11}^{-5}B_{11}^4)\delta^2
+A_{11}^{-4}B_{11}^4\delta\\
&= & (A_{11}^{-6}B_{11}^6-2A_{11}^{-4}B_{11}^4+A_{11}^{-2}B_{11}^2
+(A_{11}^{-7}B_{11}^7-A_{11}^{-5}B_{11}^4)\delta
+A_{11}^{-4}B_{11}^4)\delta\\
&= & (A_{11}^{-6}B_{11}^6-A_{11}^{-4}B_{11}^4+A_{11}^{-2}B_{11}^2
-A_{11}^{-8}B_{11}^8+A_{11}^{-6}B_{11}^6-A_{11}^{-6}B_{11}^6+A_{11}^{-4}B_{11}^4)\delta\\
&= & (A_{11}^{-6}B_{11}^6+A_{11}^{-2}B_{11}^2-A_{11}^{-8}B_{11}^8)
(-A_{11}^{-1}B_{11}-A_{11}B_{11}^{-1})\\
&= & -A_{11}^{-7}B_{11}^7-A_{11}^{-3}B_{11}^3+A_{11}^{-9}B_{11}^9
-A_{11}^{-5}B_{11}^5-A_{11}^{-1}B_{11}^1+A_{11}^{-7}B_{11}^7 \\
& = & -A_{11}^{-1}B_{11}^{1}-A_{11}^{-3}B_{11}^{3}-A_{11}^{-5}B_{11}^5+A_{11}^{-9}B_{11}^9.
\end{eqnarray*}

\end{example}

Sometimes, instead of moving a strand over or under a trace, we might want 
to move a strand \textit{through} a trace; after all, in uncolored skein 
expansions without traces we are free to move strands through the sites of
smoothings to enable faster unknotting. Unfortunately, for a general 
biquandle-colored diagram, such moves are generally obstructed by the 
biquandle coloring except in certain cases. We will consider the case of 
monochromatic colorings: 

\begin{lemma}
If the three colors on the left hand side of a Reidemeister III
move are the same then switching any of the three crossing
signs does not change the biquandle colors on the semiarcs.
\end{lemma}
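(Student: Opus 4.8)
The plan is to localize the claim to a single crossing and then use biquandle axiom (i) to show that, on monochromatic input, the color data attached to the four semiarcs of a crossing is symmetric under interchanging the over- and under-strands. First I would orient the tangle, label its semiarcs, and record the coloring rule at a crossing from Section \ref{B}: the under-strand color $a$ and over-strand color $b$ produce the outgoing pair $(a \utr b,\ b \otr a)$. Setting all three incoming strand colors equal to a common $x$, I would then propagate this rule through the diagram so that each semiarc color is written explicitly as a word in $x$ under the two operations.

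The heart of the argument is that at a crossing whose two incoming semiarcs carry a common color $c$, axiom (i) gives $c \utr c = c \otr c$, so the two outgoing semiarcs receive the same color; consequently the labeled picture of that crossing is invariant under swapping which strand passes over, which is exactly the operation of switching its sign. Since switching one crossing's sign alters nothing outside a neighborhood of that crossing and leaves the boundary colors of that neighborhood fixed, Observation \ref{ob:1} lets me conclude that the colorings of the two diagrams agree. I would then carry this out for each of the three crossings in turn.

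The step I expect to be the main obstacle is verifying that each of the three crossings really does see equal colors on its two incoming semiarcs, so that the local symmetry applies at all three and not merely at the first. This is where the monochromatic hypothesis must be combined with the coloring rule: I would check, by direct substitution and using the exchange laws (iii), that propagating the common color $x$ keeps the two strands meeting at every crossing equally colored, and I would confirm that the positive and negative coloring conventions produce the same outgoing value on such equal inputs — the point at which axiom (i) does the essential work of erasing the distinction between the two crossing signs. Once this monochromatic consistency is in place, the preservation of every semiarc color follows immediately from the single-crossing symmetry, with orientations handled uniformly since reversing a strand at a crossing with equal incoming colors again only interchanges equal-colored semiarcs.
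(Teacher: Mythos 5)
Your argument is essentially the paper's own: the authors likewise propagate the single color $x$ through the tangle, observe that every crossing then has equal incoming colors (so the middle and right columns of semiarcs carry the constant values $y=x\utr x=x\otr x$ and $z=y\utr y=y\otr y$), and conclude that switching a crossing sign merely interchanges the equal elements $c\utr c$ and $c\otr c$ supplied by axiom (i). The only difference is cosmetic: you anticipate needing the exchange laws for the propagation step, whereas the paper's verification uses nothing beyond axiom (i) and the colors displayed in its figure.
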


\begin{proof}
We observe that if the biquandle colors are the same $x\in X$ down the left 
side of the move, then the colors on the semiarcs in the middle column are 
all equal to $y=x^x=x_x$ and the colors down the right column are all
equal to $z=y^y=y_y$. Switching a crossing sign in a crossing on the left
replaces $x_x$ with $x^x$ and vice-versa, but these are both $y$ on one
side of the move and replaces $y_y$ with $y^y$ on the other, but again these
are both $z$.
\[\includegraphics{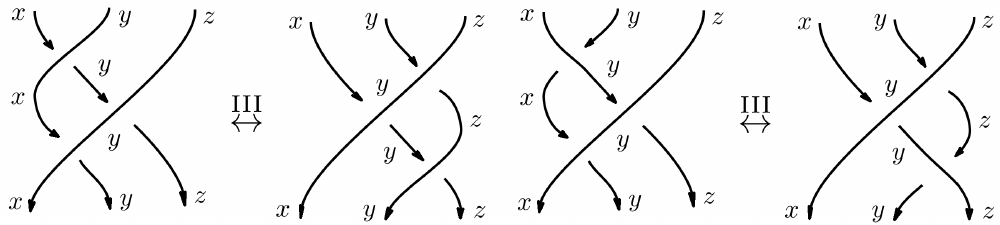}\]
\end{proof}

For the remainder of this section we will let $y=x^x=x_x$.
We identify four (eight if we count the moves with different trace signs 
separately) monochromatic trace pass-through moves:
\[\includegraphics{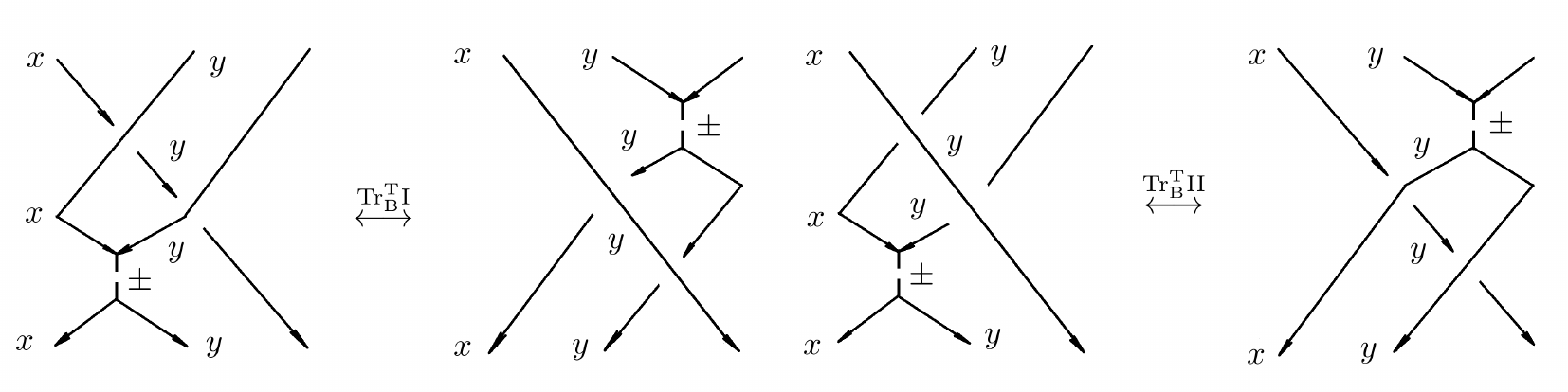}\]
\[\includegraphics{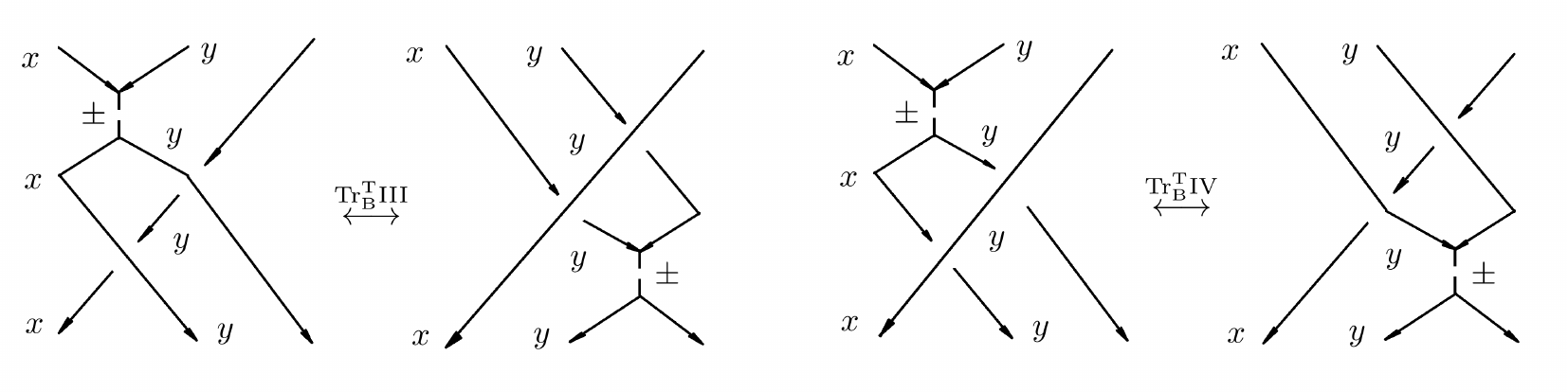}\]

Then we have
\begin{proposition}
A biquandle bracket $\beta$ satisfies the monochromatic trace pass-through 
moves if for all $x\in X$ and $y=x^x=x_x$ we have
\[A_{x,x}^2B_{y,y}^2=A_{y,y}^2B_{x,x}^2=1.\]
\end{proposition}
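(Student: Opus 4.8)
The plan is to verify, for each of the monochromatic pass-through moves pictured above, that the two sides have equal $[\ ]$-values whenever the stated condition holds; since the proposition only asserts sufficiency, I would assume $A_{x,x}^2B_{y,y}^2=A_{y,y}^2B_{x,x}^2=1$ throughout and check the moves directly rather than deriving the condition as necessary. The basic tool is the recursive expansion from the definition of $[\ ]$ together with Observation \ref{ob:1}: I would expand the two crossings that appear on each side of a move by rule (i), so that each side becomes an $R$-linear combination of crossingless trace diagrams, and then match the resulting diagrams across the two sides by boundary connectivity. Because the move is monochromatic, the color bookkeeping is controlled entirely by the relations $y=x^x=x_x$ from the preceding lemma: at a monochromatic $(x,x)$ crossing both outgoing strands carry the color $y$, so the second crossing encountered is colored $(y,y)$, and consequently only the coefficients $A_{x,x},B_{x,x},A_{y,y},B_{y,y}$ together with the scalars $w$ and $\delta$ can occur.

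First I would expand a representative move, say the one in which the moving $x$-colored strand passes through a positive type $A$ trace. Resolving its two crossings via rule (i) produces four terms with coefficients $A_{x,x}A_{y,y}$, $A_{x,x}B_{y,y}$, $B_{x,x}A_{y,y}$, and $B_{x,x}B_{y,y}$, each multiplied by a crossingless trace diagram whose $\delta$- and $w$-contributions are read off from rule (ii). By Observation \ref{ob:1} the terms whose smoothings reconnect $\partial N$ in the same way collapse together, and performing the analogous expansion on the other side of the move yields a second linear combination. After substituting $w=-A_{x,x}^2B_{x,x}^{-1}=-A_{y,y}^2B_{y,y}^{-1}$ and $\delta=-A_{x,x}^{-1}B_{x,x}-A_{x,x}B_{x,x}^{-1}$, the difference of the two sides should collapse to a single monomial in the coefficients times a diagram common to both sides; the move then holds exactly when that monomial equals $1$, and a short computation should identify it as $A_{x,x}^2B_{y,y}^2$ (respectively $A_{y,y}^2B_{x,x}^2$ for the variant with the over/under roles reversed).

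The step I expect to be the main obstacle is the careful bookkeeping of the writhe-correction exponent $n-p$ and the component count $k$ across the move: a pass-through alters both the number of signed traces and crossings and the number of closed curves obtained after deleting traces, so the powers of $w$ and $\delta$ on the two sides do not match termwise and must be reconciled before the coefficients can be compared. It is precisely this reconciliation that introduces the extra factor of $w$, which via $w=-A_{x,x}^2B_{x,x}^{-1}$ converts the naive coefficient identity into the \emph{squared} condition $A_{x,x}^2B_{y,y}^2=1$; note also that well-definedness of $w$ already forces $A_{x,x}^2B_{y,y}=A_{y,y}^2B_{x,x}$, so the pass-through hypothesis is the natural strengthening of this relation. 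I would finish by checking that each remaining variant --- the type $B$ traces, the negative traces, and the reversed over/under configurations --- produces, after the analogous expansion, either $A_{x,x}^2B_{y,y}^2=1$ or $A_{y,y}^2B_{x,x}^2=1$, so that the full list of pass-through moves is governed by exactly the pair of conditions in the statement.
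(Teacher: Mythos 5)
Your overall method --- expand the two crossings on each side of every pass-through move by the defining skein relation, match the resulting crossingless trace diagrams by boundary connectivity via Observation \ref{ob:1}, and then invoke the identities $\delta=-A_{x,x}^{-1}B_{x,x}-A_{x,x}B_{x,x}^{-1}$ and $w=-A_{x,x}^2B_{x,x}^{-1}$ to simplify --- is exactly the paper's. However, your predicted mechanism for how the comparison resolves is wrong in a way worth flagging. You claim the difference of the two sides ``collapses to a single monomial times a common diagram,'' with the squaring produced by a writhe-correction factor of $w$. Neither happens. Among the four states on each side there are smoothings whose boundary connectivity occurs on that side only, and their total coefficient must therefore vanish; for $\mathrm{Tr}_B^T\mathrm{I}$ the comparison yields one genuine matching identity, $A_{x,x}B_{y,y}=B_{x,x}^{-1}A_{y,y}^{-1}$, together with two vanishing conditions, $A_{x,x}A_{y,y}+\delta B_{x,x}A_{y,y}+B_{x,x}B_{y,y}=0$ and $A_{x,x}^{-1}A_{y,y}^{-1}+\delta A_{x,x}^{-1}B_{y,y}^{-1}+B_{x,x}^{-1}B_{y,y}^{-1}=0$. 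Substituting the expression for $\delta$ into the vanishing conditions gives $A_{y,y}B_{x,x}=A_{x,x}B_{y,y}$, and it is combining \emph{this} with the matching identity that produces the squares, e.g. $A_{y,y}^2B_{x,x}^2=(A_{y,y}B_{x,x})(A_{x,x}B_{y,y})=1$; no reconciliation of $w$-exponents enters the argument at all. Your side remark that well-definedness of $w$ forces $A_{x,x}^2B_{y,y}=A_{y,y}^2B_{x,x}$ is correct but is not the relation actually used. The plan would still succeed if executed honestly, since the expansion forces the three equations above on you regardless, but as written the proposal predicts the wrong shape for the computation and attributes the key squaring step to the wrong source.
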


\begin{proof}
Comparing the coefficients after smoothing in move Tr$_B^T$I, we obtain the
requirements that 
\[\begin{array}{rcl}
A_{x,x}B_{y,y} & = & B_{x,x}^{-1}A_{y,y}^{-1}\\
A_{x,x}A_{y,y}+\delta B_{x,x}A_{y,y} + B_{x,x}B_{y,y} & = & 0\ \mathrm{and}\\
A_{x,x}^{-1}A_{y,y}^{-1}+\delta A_{x,x}^{-1}B_{y,y}^{-1} + B_{x,x}^{-1}B_{y,y}^{-1} & = & 0
\end{array}\]
Then the second equation reduces to 
\begin{eqnarray*}
\delta B_{x,x}A_{y,y} & = & -A_{x,x}A_{y,y}-B_{x,x}B_{y,y} \\
\delta & = & -A_{x,x}B_{x,x}^{-1}-A_{y,y}^{-1}B_{y,y} 
\end{eqnarray*}
and the third also reduces to
\begin{eqnarray*}
\delta A_{x,x}^{-1}B_{y,y}^{-1} & = & -A_{x,x}^{-1}A_{y,y}^{-1} - B_{x,x}^{-1}B_{y,y}^{-1} \\
\delta  & = & -A_{y,y}^{-1}B_{y,y} - A_{x,x}B_{x,x}^{-1}.
\end{eqnarray*}
Then since $\delta=-A_{x,x}B_{x,x}^{-1}-A_{x,x}^{-1}B_{x,x}$ this says
\begin{eqnarray*}
-A_{x,x}B_{x,x}^{-1}-A_{x,x}^{-1}B_{x,x} & = & -A_{x,x}B_{x,x}^{-1}-A_{y,y}^{-1}B_{y,y} \\
A_{x,x}^{-1}B_{x,x} & = & A_{y,y}^{-1}B_{y,y} \\
A_{y,y}B_{x,x} & = & A_{x,x}B_{y,y}
\end{eqnarray*}

Then combining this with the first condition 
$A_{x,x}B_{y,y} = B_{x,x}^{-1}A_{y,y}^{-1}$, we obtain
\[A_{y,y}^2B_{x,x}^2=1\]
and a similar computation writing $\delta=-A_{y,y}B_{y,y}^{-1}-A_{y,y}^{-1}B_{y,y}$ 
yields 
\[A_{x,x}^2B_{y,y}^2=1\]
as required. The other moves yield the same conditions.
\end{proof}

\section{Questions}\label{Q}

The conditions for over-adequacy and under-adequacy are suspiciously similar to
the quandle bracket conditions in \cite{NOR}. What algebraic properties of a
biquandle $X$ or of a ring $R$ are sufficient to guarantee over- and 
under-adequacy for all $X$-brackets over $R$? What is the topological meaning
of these conditions?

\bibliography{sn-no}{}
\bibliographystyle{abbrv}

\bigskip

\noindent
\textsc{Department of Mathematical Sciences \\
Claremont McKenna College \\
850 Columbia Ave. \\
Claremont, CA 91711} 

\

\noindent
\textsc{Department of Teacher Education \\
Shumei University \\
1-1 Daigaku-cho, Yachiyo \\
Chiba Prefecture 276-0003, Japan}

\end{document}